\newcommand{\R}{\mathbb{R}}
\newcommand{\F}{\mathbb{F}}
\newcommand{\mg}{\mathfrak{g}}
\newtheorem{thm}{Theorem}[section]
\newtheorem{definition}[thm]{Definition}
\newtheorem{rem}[thm]{Remark}
\newtheorem{prop}[thm]{Proposition}
\newtheorem{ex}[thm]{Example}
\def\blfootnote{\xdef\@thefnmark{}\@footnotetext}
\date{}
\begin{document}
\sloppy

\title{Two-step nilpotent Leibniz algebras\blfootnote{Keywords: Leibniz algebra, Nilpotent Leibniz algebra, Lie rack, Coquegigrue problem.} \blfootnote{\textit{\textup{2020} Mathematics Subject Classification}: 17A32, 22A30, 20M99.}} \maketitle
\noindent
{{Gianmarco La Rosa}\footnote[1]{Supported by University of Palermo.}, {Manuel Mancini}\footnote[2]{Supported by University of Palermo.} \\
	\footnotesize{Dipartimento di Matematica e Informatica}\\
	\footnotesize{Universit\`a degli Studi di Palermo, Via Archirafi 34, 90123 Palermo, Italy}\\
	\footnotesize{gianmarco.larosa@unipa.it}, ORCID: 0000-0003-1047-5993 \\
	\footnotesize{manuel.mancini@unipa.it}, ORCID: 0000-0003-2142-6193}

\begin{abstract}
In this paper we give a complete classification of two-step nilpotent Leibniz algebras in terms of Kronecker modules associated with pairs of bilinear forms. In particular, we describe the complex and the real case of the indecomposable \emph{Heisenberg Leibniz algebras} as a generalization of the classical $(2n+1)-$dimensional Heisenberg Lie algebra $\mathfrak{h}_{2n+1}$. Then we use the Leibniz algebras - Lie local racks correspondence proposed by S.\ Covez to show that nilpotent real Leibniz algebras have always a global integration. As an application, we integrate the indecomposable nilpotent real Leibniz algebras with one-dimensional commutator ideal. We also show that every Lie quandle integrating a Leibniz algebra is induced by the conjugation of a Lie group and the Leibniz algebra is the Lie algebra of that Lie group.
\end{abstract}

\section*{Introduction}

Leibniz algebras were first introduced by J.-L.\ Loday in \cite{loday1993version} as a non-antisymmetric version of Lie algebras, and many results of Lie algebras were also established in Leibniz algebras. Earlier, such algebraic structures had been considered by A.\ Blokh, who called them D-algebras \cite{blokhLie}. Leibniz algebras play a significant role in different areas of mathematics and physics.

In \cite{falcone2016action} and \cite{bartolone2011nilpotent} Lie algebras with small dimensional commutator ideals have been considered. In this paper our main goal is to classify all the two-step nilpotent Leibniz algebras over a field of characteristic different from $2$.\\ 

The first section is devoted to some background material on Leibniz algebras which will be useful for the rest of the paper. We refer the reader to \cite{ayupov2019leibniz} for more details.

In the second section we use the definition of \emph{Kronecker module} to classify all the indecomposable nilpotent Leibniz algebras with one-dimensional commutator ideal. We associate with each of them a pair of bilinear forms, one symmetric and one skew-symmetric, and we use the simultaneous reduction of this pair to show that there are only three classes of nilpotent Leibniz algebras with one-dimensional commutator ideal. Up to isomorphisms, the first class is determined by the companion matrix of the power of a monic irreducible polynomial, and the other two are unique. In this way, we give the definition of \emph{Heisenberg Leibniz algebra}, \emph{Kronecker Leibniz algebra} and \emph{Dieudonné Leibniz algebra}.

In section 3 we study the complex and the real case of the indecomposable Heisenberg Leibniz algebras. In this case, when the field is $\mathbb{C}$, it is more convenient to use the Jordan canonical form of the companion matrix of the polynomial $(x-a)^k$, that is a $k \times k$ Jordan block of eigenvalue $a$. Moreover, when the dimension is $3$, we determine all the isomorphism classes of these algebras.

In the last section we give a global answer to the \textit{coquecigrue problem} for the nilpotent real Leibniz algebras. We mean the problem, formulated by J.-L.\ Loday \cite{loday1993version}, of finding a generalization of the Lie third theorem, which associates a local Lie group with any (real or complex) Lie algebra, to Leibniz algebras. That is, given any Leibniz algebra $\mg$, one wants to find a manifold endowed with a smooth map which plays the role of $\operatorname{Ad}$ for Lie groups, and such that the tangent space at a distinguished point, endowed with the differential $\operatorname{ad}$ of $\operatorname{Ad}$, is a Leibniz algebra isomorphic to $\mg$. In the special case of (real) Lie algebras $\mg$, one wants to obtain the usual simply connected Lie group associated with $\mg$.

We prove that, if a Lie rack $R$ integrating a left Leibniz algebra $\mg$ is idempotent, then $\mg$ is a Lie algebra and the multiplication of $R$ is induced by the conjugation of a Lie group integrating $\mg$. Moreover, we want to show that the integration of nilpotent Leibniz algebras is global. As an application, we describe the Lie global racks integrating the indecomposable nilpotent Leibniz algebras with one-dimensional commutator ideal classified in the second section.

\section{Preliminaries}

We assume that $\F$ is a field with $char(\F)\neq2$. For the general theory we refer to \cite{ayupov2019leibniz}.

\begin{definition}
	A \emph{left Leibniz algebra} over $\mathbb{F}$ is a vector space $L$ over $\mathbb{F}$ endowed with a bilinear map (called $commutator$ or $bracket$) $\left[-,-\right]\colon L\times L \rightarrow L$ which satisfies the \emph{left Leibniz identity}
	$$
	\left[x,\left[y,z\right]\right]=\left[\left[x,y\right],z\right]+\left[y,\left[x,z\right]\right],\;\;\forall x,y,z\in L.
	$$
	
\end{definition}
	In the same way we can define a right Leibniz algebra, using the right Leibniz identity
	$$
	\left[\left[x,y\right],z\right]=\left[[x,z],y\right]+\left[x,\left[y,z\right]\right],\;\;\forall x,y,z\in L.
	$$
	A Leibniz algebra that is both left and right is called \emph{symmetric Leibniz algebra}. From now on we assume that $\dim_\F L<\infty$.\\
	
	 An equivalent way to define a left Leibniz algebra, is to say that the (left) adjoint map $\operatorname{ad}_x=\left[x,-\right]$ is a derivation, for every $x\in L$. Clearly every Lie algebra is a Leibniz algebra and every Leibniz algebras with skew-symmetric commutator is a Lie algebra. Thus it is defined an adjunction (see \cite{mac2013categories}) between the category \textbf{LieAlg} of the Lie algebras and the category \textbf{LeibAlg} of the Leibniz algebras. The left adjoint of the functor inclusion $i\colon\textbf{LieAlg}\rightarrow \textbf{LeibAlg}$ is the functor $\pi\colon\textbf{LeibAlg} \rightarrow \textbf{LieAlg}$ that associates with every Leibniz algebra $L$ the quotient $L/\operatorname{Leib}(L)$, where $\operatorname{Leib}(L)=\operatorname{Span}_\F\left\{\left[x,x\right] |\,x\in L\right\}$. We observe that $\operatorname{Leib}(L)$ is the smallest bilateral ideal of $L$ such that $L/\operatorname{Leib}(L)$ is a Lie algebra. Moreover $\operatorname{Leib}(L)$ is an abelian algebra.
	 
	  As in the case of Lie algebras, a $derivation$ of a Leibniz algebra is a linear map $d\colon L\rightarrow L$ such that 
	 $$
	 d(\left[x,y\right])=\left[d(x),y\right]+\left[x,d(y)\right]\,\,\forall x,y\in L.
	 $$
	The left multiplications are particular derivations called \emph{inner derivations}. With the usual bracket\\ \hbox{$\left[d_1,d_2\right]=d_1\circ d_2 - d_2\circ d_1$}, the set $\operatorname{Der}(L)$ is a Lie algebra and the set  $\operatorname{Inn}(L)$ of all inner derivations of $L$ is an ideal of $\operatorname{Der}(L)$. Furthermore, $\operatorname{Aut}(L)$ is a Lie group and the associated Lie algebra is $\operatorname{Der}(L)$. \\
	
	We define the left and the right center of a Leibniz algebra
	$$
	\operatorname{Z}_l(L)=\left\{x\in L \,|\,\left[x,L\right]=0\right\},\,\,\, \operatorname{Z}_r(L)=\left\{x\in L \,|\,\left[L,x\right]=0\right\},
	$$
	and we observe that they coincide when $L$ is a Lie algebra. The \emph{center of L} is $\operatorname{Z}(L)=\operatorname{Z}_l(L)\cap \operatorname{Z}_r(L)$. In the case of symmetric Leibniz algebras, the left center and the right center are bilateral ideals, but in general $\operatorname{Z}_l(L)$ is an ideal of the left Leibniz algebra $L$, meanwhile the right center is not even a subalgebra.
	
	\begin{definition}
	Let $L$ be a left Leibniz algebra over $\F$ and let
	$$
	L^{(0)}=L,\,\,L^{(k+1)}=[L,L^{(k)}],\,\,\forall k\geq0,
	$$ be the \emph{lower central series of $L$}. $L$ is \emph{$n-$step nilpotent} if $L^{(n-1)}\neq0$ and $L^{(n)}=0$.
	\end{definition}

One can directly prove the following.

\begin{prop}
If $L$ is a left two-step nilpotent Leibniz algebra, then $L^{(1)}=[L,L]\subseteq \operatorname{Z}(L)$ and $L$ is symmetric.
\end{prop}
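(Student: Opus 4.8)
The plan is to unwind the definitions and then apply the left Leibniz identity exactly once. By definition of the lower central series, $L^{(1)}=[L,L]$, and two-step nilpotency means $L^{(1)}\neq 0$ while $L^{(2)}=[L,[L,L]]=0$. This last equality says precisely that $[x,w]=0$ whenever $w\in[L,L]$; in other words $[L,L]\subseteq\operatorname{Z}_r(L)$. So half of the first assertion is immediate, and it remains to prove $[L,L]\subseteq\operatorname{Z}_l(L)$, i.e.\ $[[L,L],L]=0$.

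For the remaining inclusion I would fix $x,y,z\in L$ and write the left Leibniz identity in the form $[[x,y],z]=[x,[y,z]]-[y,[x,z]]$. Both $[y,z]$ and $[x,z]$ lie in $[L,L]$, hence in $\operatorname{Z}_r(L)$ by the previous paragraph, so $[x,[y,z]]=0$ and $[y,[x,z]]=0$; therefore $[[x,y],z]=0$. Since the elements $[x,y]$ span $[L,L]$ and the bracket is bilinear, this gives $[[L,L],L]=0$, that is $[L,L]\subseteq\operatorname{Z}_l(L)$. Combining the two inclusions yields $[L,L]\subseteq\operatorname{Z}_l(L)\cap\operatorname{Z}_r(L)=\operatorname{Z}(L)$.

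For the symmetry claim I would observe that the computation above in fact shows that every triple bracket in $L$ vanishes: $[[x,y],z]=0$ by the step just proved, and $[x,[y,z]]=0$ because $[y,z]\in[L,L]\subseteq\operatorname{Z}_r(L)$. Hence every term of the right Leibniz identity $[[x,y],z]=[[x,z],y]+[x,[y,z]]$ is zero, so that identity holds trivially and $L$ is a right Leibniz algebra, i.e.\ symmetric.

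I do not expect a genuine obstacle here; the statement is essentially a two-line consequence of the Leibniz identity. The only point needing care is the bookkeeping: two-step nilpotency directly delivers membership of $[L,L]$ in the \emph{right} center $\operatorname{Z}_r(L)$, whereas it is the left Leibniz identity that is required to push $[L,L]$ into the \emph{left} center $\operatorname{Z}_l(L)$, and one should be careful not to conflate the two or to invoke the wrong form of the identity.
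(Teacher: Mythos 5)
Your proof is correct, and it is exactly the direct verification the paper alludes to when it says the proposition ``can be directly proved'' (no proof is given in the text): two-step nilpotency places $[L,L]$ in the right center, the left Leibniz identity then kills $[[x,y],z]$, and the vanishing of all triple brackets makes the right Leibniz identity hold trivially. Your closing remark about not conflating the left and right centers is exactly the right point of care.
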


\begin{prop}
	If $L$ is a left nilpotent Leibniz algebra with $\dim_\F L^{(1)}=1$, then  $L^{(1)}\subseteq \operatorname{Z}(L)$ and $L$ is symmetric.
\end{prop}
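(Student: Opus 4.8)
The plan is to reduce the statement to the previous Proposition by showing that nilpotency together with $\dim_\F L^{(1)}=1$ already forces $L$ to be (at most) two-step nilpotent. First I would look at $L^{(2)}=[L,L^{(1)}]$. Since $L^{(2)}\subseteq L^{(1)}$ and $\dim_\F L^{(1)}=1$, only two cases occur: $L^{(2)}=0$ or $L^{(2)}=L^{(1)}$. In the second case an immediate induction gives $L^{(k)}=L^{(1)}\neq 0$ for every $k\geq 1$, contradicting the nilpotency of $L$. Hence $L^{(2)}=0$, so $L$ is abelian or exactly two-step nilpotent; in the abelian case the claim is trivial, and in the two-step nilpotent case it is precisely the content of the preceding Proposition.

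Alternatively, one can argue directly without invoking that Proposition. From $L^{(2)}=[L,L^{(1)}]=0$ one gets at once $L^{(1)}\subseteq \operatorname{Z}_r(L)$. For the left center, setting $y=x$ in the left Leibniz identity yields $[[x,x],z]=0$ for all $x,z\in L$, so $\operatorname{Leib}(L)\subseteq \operatorname{Z}_l(L)$. Since $\operatorname{Leib}(L)\subseteq L^{(1)}$ and $\dim_\F L^{(1)}=1$, either $\operatorname{Leib}(L)=L^{(1)}$, in which case $L^{(1)}\subseteq \operatorname{Z}_l(L)\cap\operatorname{Z}_r(L)=\operatorname{Z}(L)$, or $\operatorname{Leib}(L)=0$, in which case $L$ is a Lie algebra (using $char(\F)\neq 2$), whence $\operatorname{Z}_l(L)=\operatorname{Z}_r(L)=\operatorname{Z}(L)\supseteq L^{(1)}$. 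Either way $L^{(1)}\subseteq \operatorname{Z}(L)$.

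Finally, once $L^{(1)}\subseteq \operatorname{Z}(L)$ is known, $L$ is symmetric for purely formal reasons: for all $x,y,z\in L$ the products $[[x,y],z]$, $[x,[y,z]]$, $[[x,z],y]$, $[y,[x,z]]$ all vanish, since $[x,y],[y,z],[x,z]\in L^{(1)}\subseteq\operatorname{Z}(L)$, so both the left and the right Leibniz identities hold trivially. I expect the only genuinely substantive point to be the dichotomy $\operatorname{Leib}(L)=0$ versus $\operatorname{Leib}(L)=L^{(1)}$ — equivalently, recognising that nilpotency forces $L^{(2)}=0$ — everything else being bookkeeping.
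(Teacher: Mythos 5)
Your argument is correct, and in fact the paper offers no proof to compare it against: both this proposition and the preceding one are stated after the remark ``One can directly prove the following,'' so you are filling a gap the authors left to the reader. The key observation is exactly the one you isolate: since $L^{(2)}=[L,L^{(1)}]\subseteq[L,L]=L^{(1)}$ and $\dim_\F L^{(1)}=1$, nilpotency forces $L^{(2)}=0$ (otherwise the lower central series stabilises at $L^{(1)}\neq 0$), so $L$ is in fact two-step nilpotent and the statement reduces to the previous proposition. Your second, self-contained route is the more useful one here, precisely because the previous proposition is also left unproved: $L^{(2)}=0$ gives $L^{(1)}\subseteq\operatorname{Z}_r(L)$; the left Leibniz identity with $y=x$ gives $\operatorname{Leib}(L)\subseteq\operatorname{Z}_l(L)$; and the dichotomy $\operatorname{Leib}(L)=L^{(1)}$ versus $\operatorname{Leib}(L)=0$ (the latter making $L$ a Lie algebra, where left and right centers coincide) yields $L^{(1)}\subseteq\operatorname{Z}(L)$ in either case. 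The final step, that $L^{(1)}\subseteq\operatorname{Z}(L)$ makes every term of both Leibniz identities vanish and hence makes $L$ symmetric, is also correct. One cosmetic remark: the hypothesis $char(\F)\neq 2$ is not actually needed where you invoke it, since $\operatorname{Leib}(L)=0$ means $[x,x]=0$ for all $x$, and alternating implies skew-symmetric by polarisation in any characteristic.
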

	
	\section{Nilpotent Leibniz algebras with one-dimensional commutator ideal}

Let $L$ be a two-step nilpotent Leibniz algebra, and let $\lbrace z_1,\ldots,z_t\rbrace$ be a basis of the commutator ideal $[L,L]$, thus, for any $x,y\in L$,
$$
[x,y]=\phi_1(x,y)z_1+\cdots+\phi_t(x,y)z_t,
$$
where $\phi_i\colon L\times L\rightarrow \F$ is a bilinear form, for every $i=1,\ldots,t$. In the Lie algebra case, $\phi_1,\ldots,\phi_t$ are skew-symmetric forms.
In this section we reduce such a given bilinear form to a suitable canonical representation.\\

Let $L$ be a nilpotent Leibniz algebra with one-dimensional commutator ideal $\left[L,L\right]=\operatorname{Span}_\F\left\{z\right\}$, where $z \in L$ is fixed, and let $\left[x,y\right]=\phi(x,y)z$, where $\phi:L\times L\rightarrow \F$ is a bilinear form. We observe that, if $L$ is not a Lie algebra, then  $\operatorname{Leib}(L)=[L,L] \subseteq \operatorname{Z}(L)$.\\

	We can decompose $\phi$ into its symmetric and skew-symmetric parts
	$$
	\sigma=\dfrac{\phi+\phi^t}{2},\,\,\,\, \alpha=\dfrac{\phi-\phi^t}{2}.
	$$
	A complete classification of this pair of bilinear forms gives a classification of this class of Leibniz algebras. 
	The simultaneous reduction to canonical form of a pair of bilinear forms was initiated by L.\ Kronecker in \cite{kronecker1890algebraische} and then was studied by J.\ Dieudonné in \cite{Dieudonné}.
	
	\begin{definition} 
	A Kronecker module is a quadruple $\left(V_1,V_2,f_1,f_2\right)$, where $V_1$, $V_2$ are vector spaces over $\F$ and $f_1,f_2\colon V_1\rightarrow V_2$ are linear maps.
	\end{definition}

Every Kronecker module is the  direct sum of indecomposable modules and these were completely classified by several authors (cf. \cite{Falcone2004}). Given a pair of bilinear forms $\alpha,\sigma$ defined over a Leibniz algebra $L$ as above, it is possible to associate the Kronecker module $\left(L,L^*,\bar{\alpha},\bar{\sigma}\right)$, where $L^*$ is the algebraic dual space of $L$ and, for every $x,y$ in $L$, $\bar{\alpha}(x)$ and $\bar{\sigma}(y)$ are defined by
$$
\bar{\alpha}(x)\colon z\mapsto \alpha(x,z),\,\,\,\,\bar{\sigma}(y)\colon z\mapsto \sigma(y,z), \,\, \forall z\in L.
$$ 

We say that $L$ is \emph{decomposable} if $L = U\oplus U^\perp$, where $U^\perp$ is the orthogonal space of $U$ with respect to both $\alpha$ and $\sigma$. In this case it is possible to find a basis of $L$ such that the matrix associated with $\phi$ is a diagonal block matrix. With standard arguments we can assume that $L$ is indecomposable.\\

The indecomposable modules $\left(L,L^*,\bar{\alpha},\bar{\sigma}\right)$ turn out to be one of the following three pairs

\begin{equation}\label{modulo1}
 \begin{pmatrix}
 	0 & I_n \\ -I_n & 0
 \end{pmatrix},
\begin{pmatrix}
	0 & A \\ A^t & 0  
\end{pmatrix}
\end{equation}
\begin{equation}\label{modulo2}
\begin{pmatrix}
	0 & A \\ -A^t & 0
\end{pmatrix},
\begin{pmatrix}
	0 & I_n \\ I_n & 0  
\end{pmatrix}
\end{equation}
\begin{equation}\label{modulo3}
\begin{pmatrix}
	0 & J_1 \\ -J_1^t & 0
\end{pmatrix},
\begin{pmatrix}
	0 & J_2 \\ J_2^t & 0  
\end{pmatrix}
\end{equation}
where $A\in \operatorname{M}_n(\F)$, and $J_1,J_2$ are the matrices associated with linear applications $F_1,F_2\colon \F^n\rightarrow \F^{n+1}$ defined by $F_1(x_1,\ldots,x_n)=(x_1,\ldots,x_n,0)$ and $F_2(x_1,\ldots,x_n)=(0,x_1,\ldots,x_n)$.\\ \\
The following result is a useful tool for the classification of the indecomposable nilpotent Leibniz algebras with one-dimensional commutator ideals. .

\begin{prop}
	Let $L_1$ and $L_2$ be Leibniz algebras of dimension $n$ with one-dimensional commutator ideals $\left[L_1,L_1\right]=\mathbb{F}z_1$ and $\left[L_2,L_2\right]=\mathbb{F}z_2$, and let $\phi_1$ and $\phi_2$ be the bilinear forms associated with $L_1$ and $L_2$ respectively. Let $\Phi_1$ and $\Phi_2$ be the matrices of $\phi_1$ and $\phi_2$ respectively. $L_1$ is isomorphic to $L_2$ if and only if $\Phi_1$ is congruent to $\Phi_2$.
\end{prop}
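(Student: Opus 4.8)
The plan is to unwind both directions from the definition of an algebra isomorphism and translate it into a statement about the associated bilinear forms. Fix bases $\{e_1,\dots,e_n\}$ of $L_1$ and $\{f_1,\dots,f_n\}$ of $L_2$ with $e_n = z_1$, $f_n = z_2$ spanning the respective commutator ideals; since $L_1,L_2$ are nilpotent with one-dimensional commutator ideal, by the Proposition above we have $[L_i,L_i]\subseteq \operatorname{Z}(L_i)$, so in particular $z_1$ (resp. $z_2$) is central. This centrality is what makes the reduction clean: any linear map sending $z_1$ to a scalar multiple of $z_2$ automatically respects the bracket on that distinguished vector.

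First I would prove the ``if'' direction. Suppose $\Phi_1$ is congruent to $\Phi_2$, say $\Phi_2 = P^t \Phi_1 P$ for some invertible $P\in\operatorname{M}_n(\F)$. Let $g:L_2\to L_1$ be the linear isomorphism whose matrix (in the chosen bases) is $P$, and then rescale: since $\phi_2(x,y) = \phi_1(g(x),g(y))$ for all $x,y$, the map $g$ sends $[L_2,L_2]$ into $[L_1,L_1] = \F z_1$, and one checks $g$ maps $z_2$ to a nonzero multiple $\lambda z_1$. Composing $g$ with the scaling $z_1\mapsto \lambda^{-1}z_1$ on the commutator line (extended by identity on a complement, which is harmless because the bracket only outputs into $\F z_1$) produces an isomorphism; the key computation is $[g(x),g(y)]_{L_1} = \phi_1(g(x),g(y))z_1 = \phi_2(x,y)z_1$, which must equal $g([x,y]_{L_2}) = \phi_2(x,y)g(z_2)$, and this forces the normalization. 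I would spell out that the left Leibniz identity is automatically preserved because it is equivalent to a condition involving only brackets, all of which land in the central line.

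For the ``only if'' direction: given an isomorphism $\varphi:L_1\to L_2$, it maps $[L_1,L_1]$ isomorphically onto $[L_2,L_2]$, hence $\varphi(z_1) = \mu z_2$ for some $\mu\in\F^\times$. Then $\varphi([x,y]_{L_1}) = \phi_1(x,y)\varphi(z_1) = \mu\,\phi_1(x,y)\,z_2$, while $[\varphi(x),\varphi(y)]_{L_2} = \phi_2(\varphi(x),\varphi(y))\,z_2$; equating gives $\phi_2(\varphi(x),\varphi(y)) = \mu\,\phi_1(x,y)$. If $Q$ denotes the matrix of $\varphi$, this reads $Q^t\Phi_2 Q = \mu\Phi_1$, i.e. $\Phi_1$ is congruent (up to the scalar $\mu$) to $\Phi_2$. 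To absorb $\mu$ one replaces $Q$ by $Q$ composed with multiplication by $\mu^{-1}$ on the line $\F z_2$ inside $L_2$ (equivalently one notes that scaling $z_2$ by $\mu$ is itself an automorphism of $L_2$ because the bracket is valued in $\F z_2$), yielding genuine congruence $P^t\Phi_1 P = \Phi_2$.

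The only real subtlety — and the step I would be most careful about — is the bookkeeping around the scalar $\mu$: an isomorphism of the \emph{algebras} only forces congruence of the forms up to a nonzero scalar, and one must observe that rescaling the basis vector $z_i$ spanning the one-dimensional commutator ideal is a legitimate algebra automorphism (again because every bracket lands in $\F z_i$, so the rescaled bracket still satisfies the Leibniz identity). Once that observation is in place, the scalar is absorbed and the equivalence is exactly congruence of matrices. Everything else is the standard dictionary between change of basis and congruence of the Gram matrix of a bilinear form.
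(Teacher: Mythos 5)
Your overall route is the same as the paper's: an isomorphism must carry $z_1$ to a scalar multiple $\mu z_2$ and therefore intertwines $\phi_1$ and $\phi_2$ up to the scalar $\mu$, while conversely a congruence yields a change of basis realizing an isomorphism. The step that genuinely fails as written is your mechanism for absorbing the scalar $\mu$. The map $\psi$ that multiplies $z_2$ by $\mu^{-1}$ and is the identity on a complement $W$ of $\F z_2$ is \emph{not} an automorphism of $L_2$: for $x,y\in W$ one has $[\psi(x),\psi(y)]=[x,y]$, because the $z_2$-component of an element never contributes to a bracket ($z_2$ being central), whereas $\psi([x,y])=\mu^{-1}[x,y]$; so $\psi$ is a homomorphism only when $\mu=1$. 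Worse, composing $\varphi$ with $\psi$ does not change the pulled-back form at all: since $z_2$ lies in the radical of $\phi_2$, we get $\phi_2(\psi\varphi(x),\psi\varphi(y))=\phi_2(\varphi(x),\varphi(y))=\mu\,\phi_1(x,y)$, so you are still left with $Q^t\Phi_2Q=\mu\Phi_1$. What actually absorbs the scalar --- and what the paper's proof does implicitly by passing to the basis ending in $kz_2$ --- is replacing the chosen \emph{generator} $z_2$ of $[L_2,L_2]$ by $\mu z_2$, which rescales the form itself ($\phi_2\mapsto\mu^{-1}\phi_2$, hence $\Phi_2\mapsto\mu^{-1}\Phi_2$) rather than acting by an automorphism of $L_2$; equivalently, one must verify directly that $\mu\Phi$ is congruent to $\Phi$ for the forms at hand (true for the block anti-diagonal canonical pairs occurring in the classification, e.g.\ via $\operatorname{diag}(\mu I_n, I_n)$, but not for an arbitrary bilinear form over an arbitrary field --- consider $\mu=-1$ and a positive definite symmetric part over $\R$).

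A smaller gap, which the paper's own proof shares, sits in your ``if'' direction: from $\phi_2(x,y)=\phi_1(g(x),g(y))$ you may only conclude that $g$ carries the radical of $\phi_2$ onto the radical of $\phi_1$; since these radicals can properly contain $\F z_2$ and $\F z_1$ (e.g.\ for decomposable algebras with an abelian direct summand), $g(z_2)$ need not be a multiple of $z_1$. One must first compose $g$ with a linear automorphism of $L_1$ that is the identity on a complement of the radical of $\phi_1$ and moves $g(z_2)$ to $z_1$ inside that radical; such a map preserves $\phi_1$, and after this adjustment your normalization computation goes through verbatim.
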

\begin{proof}
	Let $\varphi \colon L_1\rightarrow L_2$ be a Leibniz algebras isomorphism and let $\left\{e_1,\ldots,e_{n-1},z_1\right\}$ be a basis of $L_1$. Then $\varphi(z_1)=kz_2$, for some $k \in \mathbb{F}^*$, and $\left\{\varphi(e_1),\ldots,\varphi(e_{n-1}),kz_2\right\}$ is a basis of $L_2$ such that the associated matrix is $\Phi_1$. Then there exists a matrix $P\in \operatorname{GL}_n(\F)$ such that $P\Phi_2 P^t=\Phi_1$.\\
	Conversely, we suppose that there exists $P\in \operatorname{GL}_n(\F)$ such that $P\Phi_1 P^t=\Phi_2$. $\Phi_1$ and $\Phi_2$ are matrices associated with bilinear forms, so $P$ induces a change of basis $\left\{e_1,\ldots,e_{n-1},z_1\right\}\rightarrow\left\{\overline{e_1},\ldots,\overline{e_{n-1}},kz_1\right\}$, with $k \in \mathbb{F}^*$, of $L_1$. Thus the isomorphism between $L_1$ and $L_2$ is given by the linear map 
	$$
	\varphi(\overline{e_i})=e_i',\,\, \varphi(kz_1)=z_2, 
	$$
	where $\left\{e_1',\ldots,e_{n-1}',z_2\right\}$ is a basis of $L_2$.
\end{proof}
We observe that, if $X\in \operatorname{GL}_n(\F)$, then the matrix
$$
\begin{pmatrix}
	X & 0 \\
	0 & \left(X^{-1}\right)^t
\end{pmatrix}\
$$
 induces a change of basis that transforms the canonical pairs (\ref{modulo1}) and (\ref{modulo2}) respectively in
\begin{equation*}
 \left(\begin{pmatrix}
	0 & I_n \\ -I_n & 0
\end{pmatrix},
\begin{pmatrix}
	0 & XAX^{-1} \\ \left(XAX^{-1}\right)^t & 0  
\end{pmatrix}\right),
\left(\begin{pmatrix}
0 & XAX^{-1} \\ -\left(XAX^{-1}\right)^t & 0 
\end{pmatrix},
\begin{pmatrix}
	 	0 & I_n \\ I_n & 0
\end{pmatrix}\right),
\end{equation*}
so we can always reduce $A$ in its rational canonical form. Thus, with the assumption that $L$ is indecomposable, if $A\in \operatorname{M}_n(\F)$ is not the zero matrix, then $A$ is the companion matrix of a power of a monic irreducible polynomial $f(x)\in\F\left[x\right]$ (see \cite{jacobson2012basic}).

Moreover, if $A$ is not a singular matrix, then the second canonical pair is equivalent to the first one, in fact
\begin{align*}
&\begin{pmatrix}
	A^{-1} & 0 \\
	0 & I_n
\end{pmatrix}\begin{pmatrix}
0 & A \\ -A^t & 0  
\end{pmatrix}\begin{pmatrix}
A^{-1} & 0 \\
0 & I_n
\end{pmatrix}^t=\begin{pmatrix}
0 & I_n \\
-I_n & 0
\end{pmatrix}\\
&\begin{pmatrix}
	A^{-1} & 0 \\
	0 & I_n
\end{pmatrix}\begin{pmatrix}
	0 & I_n \\ I_n & 0
\end{pmatrix}\begin{pmatrix}
	A^{-1} & 0 \\
	0 & I_n
\end{pmatrix}^t=\begin{pmatrix}
	0 & A^{-1} \\
	\left(A^{-1}\right)^t & 0
\end{pmatrix}.
\end{align*}
Otherwise, we can represent a singular matrix as an $n \times n$ Jordan block of eigenvalue zero and we have a unique Kronecker module of type $(\ref{modulo2})$ up to isomorphisms.

\begin{definition}
Let $f(x)\in\F\left[x\right]$ be a monic irreducible polynomial. Let $k\in\mathbb{N}$ and let $A$ be the companion matrix of $f(x)^k$. We define the \emph{Heisenberg} Leibniz algebra $\mathfrak{l}_{2n+1}^A$ as the $(2n+1)$-dimensional indecomposable Leibniz algebra with associated Kronecker module of type $(\ref{modulo1})$.
\end{definition}

In general, if $A=(a_{ij})\in \operatorname{M}_n(\F)$ and if we fix a basis $\left\{e_1,\ldots,e_n,f_1,\ldots,f_n,h\right\}$ of $\mathfrak{l}_{2n+1}^{A}$, then the non-trivial commutators are
$$
\left[e_i,f_j\right]=\left(\delta_{ij}+a_{ij}\right)h,\,\, \left[f_j,e_i\right]=\left(-\delta_{ij}+a_{ij}\right)h,\,\, \forall i,j=1,\ldots,n,
$$
so we can associate with $\mathfrak{l}_{2n+1}^{A}$ the following structure matrix
$$
	\left(
	\begin{array}{c|c|c}
		&  & 0 \\
		0 & I_n+A & \vdots \\
		& & 0 \\
		\hline
		&  & 0 \\
		-I_n+A^t & 0 & \vdots \\
		& & 0 \\
		\hline
		0 \cdots 0 & 0 \cdots 0 & 0
	\end{array}
	\right).
$$

Notice that, if $\left(a_{ij}\right)$ is the zero matrix, then we obtain the classical Heisenberg algebra $\mathfrak{h}_{2n+1}$. 

\begin{definition}
	Let $n \in \mathbb{N}$ and let $A$ be the companion matrix of the polynomial $x^n$. We define the \emph{Kronecker} Leibniz algebra $\mathfrak{k}_{n}$ as the $(2n+1)$-dimensional indecomposable Leibniz algebra with associated Kronecker module of type $(\ref{modulo2})$.
\end{definition}

\begin{definition}
We define the \emph{Dieudonné} Leibniz algebra $\mathfrak{d}_n$ is the $(2n+2)-$dimensional Leibniz algebra with associated Kronecker module of type $(\ref{modulo3})$.
\end{definition}

We observe that, for every $n\in\mathbb{N}$, the Kronecker Leibniz algebra $\mathfrak{k}_{n}$ and the Dieudonné Leibniz algebra $\mathfrak{d}_{n}$ are not Lie algebras and they are unique up to isomorphisms, because of the unicity of the Kronecker modules of type $(\ref{modulo2})$ and $(\ref{modulo3})$.

\section{Complex and Real Heisenberg Leibniz algebras}

Now we want to describe in details the indecomposable Heisenberg Leibniz algebras in the case the field $\F$ is $\mathbb{C}$ or $\R$.

\subsection{The case $\F=\mathbb{C}$}
Let $k\in\mathbb{N}$ and let $f(x)=x-a\in\mathbb{C}[x]$. Then the companion matrix of $f(x)^k$ is
$$
A=
\begin{pmatrix}
	0 & \cdots & \cdots & 0 & -c_k \\
	1 & \ddots & & \vdots & -c_{k-1} \\
	0 & 1 & \ddots & \vdots & \vdots \\
	\vdots & \ddots & \ddots & 0 & \vdots \\
	0 & \cdots & 0 & 1 & -c_1
\end{pmatrix}\in \operatorname{M}_k(\mathbb{C}),
$$
where  $c_j=\binom{k}{j-1}(-a)^{k-j+1}$, for every $j=1,\ldots,k$. In this case, however, it is more convenient to use the Jordan canonical form. Indeed, it is well known that the matrix $A$ is similar to the $k\times k$ Jordan block of eigenvalue $a$
$$
J_a=\begin{pmatrix}
	a&0&0&\cdots&0         \\                       
	1&\ddots&\ddots&&\vdots      \\
	\vdots&\ddots&\ddots&\ddots&0\\
	\vdots&&\ddots&\ddots&0     \\
	0&\cdots&\cdots&1&a   
\end{pmatrix}.
$$
Thus $\mathfrak{l}_{2k+1}^A\cong\mathfrak{l}_{2k+1}^{J_a}$ and the Leibniz bracket is given by
\begin{align*}
	\left[e_1,f_j\right]=& \delta_{1,j}(1+a)h \\
\left[e_i,f_j\right]=&(\delta_{i,j}(1+a)+\delta_{i-1,j})h,\,\,\forall i=2,\ldots,k; \\
\left[f_j,e_i\right]=&(\delta_{i,j}(-1+a)+\delta_{i,j+1})h,\,\,\forall j=1,\ldots,k-1\\
\left[f_k,e_i\right]=& \delta_{i,k}(-1+a)h,
\end{align*} 
where $\left\{e_1,\ldots,e_n,f_1,\ldots, f_n,h\right\}$ is a basis of $\mathfrak{l}_{2k+1}^{J_a}$.

\begin{prop}
	Let $a \in \mathbb{C}$. The Heiseberg-Leibniz algebras $\mathfrak{l}_{2k+1}^{J_a}$ and $\mathfrak{l}_{2k+1}^{J_{-a}}$ are isomorphic.
\end{prop}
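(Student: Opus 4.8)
The plan is to combine two reductions. The first is the congruence criterion established above: two nilpotent Leibniz algebras with one-dimensional commutator ideal are isomorphic if and only if their structure matrices are congruent. The second is the observation made right after the definition of the Heisenberg Leibniz algebra, namely that the change of basis $\operatorname{diag}\left(X,(X^{-1})^t\right)$ turns the Kronecker pair $(\ref{modulo1})$ attached to $A$ into the one attached to $XAX^{-1}$; extending this by the identity on the generator of $[L,L]$ gives an isomorphism $\mathfrak{l}_{2n+1}^{A}\cong\mathfrak{l}_{2n+1}^{B}$ whenever $A$ and $B$ are similar. So $\mathfrak{l}_{2n+1}^{A}$ depends only on the similarity class of $A$.

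Using the second reduction first, I would observe that $-J_a$ and $J_{-a}$ are similar: $-J_a$ is lower triangular with $-a$ on the diagonal and $-1$ on the subdiagonal, so conjugating it by $D=\operatorname{diag}\left(1,-1,1,\ldots,(-1)^{k-1}\right)$ fixes the diagonal and multiplies every subdiagonal entry by $-1$, producing exactly $J_{-a}$. Hence $\mathfrak{l}_{2k+1}^{-J_a}\cong\mathfrak{l}_{2k+1}^{J_{-a}}$, and it remains to prove that $\mathfrak{l}_{2k+1}^{J_a}\cong\mathfrak{l}_{2k+1}^{-J_a}$.

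For this I would prove the more general fact that $\mathfrak{l}_{2n+1}^{A}\cong\mathfrak{l}_{2n+1}^{-A}$ for every $A\in\operatorname{M}_n(\F)$, by writing down an explicit congruence between the structure matrices
$$
\Phi_{A}=\begin{pmatrix} 0 & I_n+A & 0 \\ -I_n+A^t & 0 & 0 \\ 0 & 0 & 0 \end{pmatrix},\qquad \Phi_{-A}=\begin{pmatrix} 0 & I_n-A & 0 \\ -I_n-A^t & 0 & 0 \\ 0 & 0 & 0 \end{pmatrix}.
$$
Choose $Q\in\operatorname{GL}_n(\F)$ with $QA^tQ^{-1}=A$ — such a $Q$ exists since every square matrix is similar to its transpose, and for $A=J_a$ one may take $Q$ to be the anti-diagonal permutation matrix $W$, which satisfies $WJ_a^tW^{-1}=J_a$ — and set
$$
P=\begin{pmatrix} 0 & Q & 0 \\ -(Q^{t})^{-1} & 0 & 0 \\ 0 & 0 & 1 \end{pmatrix}\in\operatorname{GL}_{2n+1}(\F).
$$
A block computation gives
$$
P\,\Phi_{A}\,P^{t}=\begin{pmatrix} 0 & -Q(-I_n+A^t)Q^{-1} & 0 \\ -(Q^{-1})^t(I_n+A)Q^t & 0 & 0 \\ 0 & 0 & 0 \end{pmatrix},
$$
and using $QA^tQ^{-1}=A$ together with its transpose $(Q^{-1})^tAQ^t=A^t$, the upper block becomes $Q(I_n-A^t)Q^{-1}=I_n-A$ and the lower block becomes $-\bigl(I_n+A^t\bigr)$, so $P\,\Phi_{A}\,P^{t}=\Phi_{-A}$. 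By the congruence criterion this yields $\mathfrak{l}_{2n+1}^{A}\cong\mathfrak{l}_{2n+1}^{-A}$, and combining with the previous paragraph we conclude $\mathfrak{l}_{2k+1}^{J_a}\cong\mathfrak{l}_{2k+1}^{-J_a}\cong\mathfrak{l}_{2k+1}^{J_{-a}}$.

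The displayed matrix manipulation is routine; the only real content is the choice of $Q$, i.e. the classical fact that $A$ is conjugate to $A^t$, which is what allows one to interchange the $e_i$ with the $f_i$ while absorbing the sign that distinguishes the symmetric part $A$ of the bracket. I expect the only delicate point to be bookkeeping — keeping track of all transposes and signs in the $(2n+1)\times(2n+1)$ congruence, and checking that leaving the lower-right entry of $P$ equal to $1$ (that is, not rescaling the generator of $[L,L]$) is admissible.
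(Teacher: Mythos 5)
Your argument is correct --- the block computation $P\,\Phi_A\,P^t=\Phi_{-A}$ checks out, the choice $Q=W$ does satisfy $WJ_a^tW^{-1}=J_a$, and $D(-J_a)D^{-1}=J_{-a}$ is right --- but you factor the problem differently from the paper. The shared core idea is to interchange the $e_i$ with the $f_i$, which swaps the roles of the two off-diagonal blocks $I_n+A$ and $-I_n+A^t$ while absorbing a sign. The paper executes this with the bare swap $\varphi(e_i)=f_i'$, $\varphi(f_i)=e_i'$, $\varphi(h)=-h'$, which lands directly in $\mathfrak{l}_{2k+1}^{-J_a^t}$, and then finishes with the single similarity $-J_a^t\sim J_{-a}$; no auxiliary conjugation is needed because the transpose is simply carried into the target. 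You instead prove the intermediate lemma $\mathfrak{l}_{2n+1}^{A}\cong\mathfrak{l}_{2n+1}^{-A}$, which forces you to undo the transpose up front by a $Q$ with $QA^tQ^{-1}=A$ (the classical fact that a matrix is similar to its transpose) and then to handle the sign of the eigenvalue separately. What your version buys is a cleanly stated general fact, $\mathfrak{l}_{2n+1}^{A}\cong\mathfrak{l}_{2n+1}^{-A}$ for every $A$, run entirely through the congruence criterion for structure matrices; what the paper's version buys is brevity and the avoidance of the extra input $A\sim A^t$, since $\mathfrak{l}_{2n+1}^{A}\cong\mathfrak{l}_{2n+1}^{-A^t}$ holds verbatim for arbitrary $A$ via an explicit two-line isomorphism. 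Your worry about the last point is unfounded: leaving the lower-right entry of $P$ equal to $1$ is an admissible special case of the congruences the criterion allows.
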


\begin{proof} The algebras
	$\mathfrak{l}_{2k+1}^{J_a}$ and $\mathfrak{l}_{2k+1}^{-J_a^t}$ are isomorphic via the linear map $\varphi$ defined by
	$$
	\varphi(e_i)=f_i', \; \; \varphi(f_i)=e_i', \; \; \varphi(h)=-h', \; \; \forall i=1,\cdots,n
	$$
	where $\lbrace e_1,\cdots,e_n,f_1,\cdots,f_n,h\rbrace$ and $\lbrace e_1',\cdots,e_n',f_1',\cdots,f_n',h'\rbrace$ are bases of $\mathfrak{l}_{2k+1}^{J_a}$ and $\mathfrak{l}_{2k+1}^{-J_a^t}$ respectively. Moreover the matrix $-J_a^t$ is similar to the $n \times n$ Jordan block $J_{-a}$. Thus $\mathfrak{l}_{2k+1}^{J_a} \cong \mathfrak{l}_{2k+1}^{J_{-a}}$.
\end{proof}

When $k=1$ the converse result is also true.

\begin{prop}
	Let $a,a'\in\mathbb{C}$. The Heisenberg Leibniz algebras $\mathfrak{l}_3^a$ and $\mathfrak{l}_3^{a'}$ are isomorphic if and only if $a'=\pm a$.
\end{prop}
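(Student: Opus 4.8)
The ``if'' part follows at once from the previous Proposition: for $k=1$ the Jordan block $J_a$ is the $1\times 1$ matrix $(a)$, so $\mathfrak{l}_3^{a}=\mathfrak{l}_3^{J_a}\cong\mathfrak{l}_3^{J_{-a}}=\mathfrak{l}_3^{-a}$, while trivially $\mathfrak{l}_3^a\cong\mathfrak{l}_3^a$. Hence only the converse needs an argument, and for this I would use the criterion that two such Leibniz algebras are isomorphic precisely when their structure matrices are congruent. With respect to a basis $\{e,f,h\}$ of $\mathfrak{l}_3^a$ the structure matrix is $\Phi_a=\begin{pmatrix} 0 & 1+a & 0 \\ a-1 & 0 & 0 \\ 0 & 0 & 0\end{pmatrix}$; its symmetric part $\sigma_a$ has nonzero block $\begin{pmatrix} 0 & a \\ a & 0\end{pmatrix}$ and its skew-symmetric part $\alpha$ has nonzero block $\begin{pmatrix} 0 & 1 \\ -1 & 0\end{pmatrix}$ (both bordered by a vanishing row and column), and crucially $\alpha$ does not depend on $a$. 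So suppose $P\in\mathrm{GL}_3(\F)$ satisfies $P\Phi_aP^t=\Phi_{a'}$; since $X\mapsto PXP^t$ commutes with transposition, $P$ must simultaneously satisfy $P\alpha P^t=\alpha$ and $P\sigma_a P^t=\sigma_{a'}$.

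The next step would be to exploit the degeneracy of $\alpha$: its radical is $\langle h\rangle$, and $P\alpha P^t=\alpha$ forces $P^t$ to stabilize $\langle h\rangle$, so in block form adapted to $\langle e,f\rangle\oplus\langle h\rangle$ the two identities above restrict to the $2\times 2$ blocks as $Q^t\alpha' Q=\alpha'$ and $Q^t\sigma_a' Q=\sigma_{a'}'$ for an invertible $2\times 2$ matrix $Q$, where $\alpha'=\begin{pmatrix}0&1\\-1&0\end{pmatrix}$ and $\sigma_a'=\begin{pmatrix}0&a\\a&0\end{pmatrix}$. Taking determinants of the first identity gives $(\det Q)^2=1$, and then the second gives $-a'^{2}=\det\sigma_{a'}'=(\det Q)^2\det\sigma_a'=-a^{2}$, whence $a'=\pm a$. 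Equivalently, $(\alpha')^{-1}\sigma_a'$ has eigenvalues $\{a,-a\}$ and the congruence sends it to a matrix similar to itself, so $\{a,-a\}=\{a',-a'\}$; this is just the simultaneous-reduction viewpoint of Section~2 applied in dimension two.

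The only point requiring a little care is the reduction to the $2\times 2$ blocks, i.e.\ checking that $P^t$ indeed stabilizes the common radical $\langle h\rangle$ and that the two identities pass cleanly to the quotient $\mathfrak{l}_3^a/\langle h\rangle$; this is a short computation once one observes that $h$ lies in the radical of both $\sigma_a$ and $\alpha$, and there is no deeper obstruction. As a consistency check, the isomorphism $e\mapsto f',\ f\mapsto e',\ h\mapsto -h'$ realizing $\mathfrak{l}_3^a\cong\mathfrak{l}_3^{-a}$ in the ``if'' direction is exactly the $k=1$ specialization of the map in the previous Proposition.
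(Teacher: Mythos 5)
Your proof is correct, but the ``only if'' direction follows a genuinely different route from the paper. The paper writes down a general isomorphism $\varphi:\mathfrak{l}_3^a\rightarrow\mathfrak{l}_3^{a'}$ on a basis $\{x,y,z\}$, imposes the four relations coming from $[x,x]=[y,y]=0$, $[x,y]=(1+a)z$ and $[y,x]=(-1+a)z$, and solves the resulting system, concluding that $\varphi$ is either the identity-type map (giving $a'=a$) or the swap $x\mapsto y'$, $y\mapsto x'$, $z\mapsto -z'$ (giving $a'=-a$); this has the side benefit of exhibiting all isomorphisms explicitly. You instead invoke the congruence criterion of Section~2, observe that congruence preserves the symmetric and skew-symmetric parts separately (so $P\alpha P^t=\alpha$ and $P\sigma_a P^t=\sigma_{a'}$), reduce to the nondegenerate $2\times 2$ blocks via the common radical $\langle h\rangle$, and extract the invariant $a^2$ by determinants (equivalently, the spectrum $\{a,-a\}$ of the pencil $(\alpha')^{-1}\sigma_a'$). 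Your argument is shorter, avoids the case analysis, works verbatim over any field of characteristic $\neq 2$ rather than just $\mathbb{C}$, and makes transparent \emph{why} $a$ is an invariant only up to sign --- it is exactly the simultaneous-reduction philosophy of Section~2 specialized to dimension two. The block-reduction step you flag as ``requiring a little care'' does go through as you describe: $P\alpha P^t=\alpha$ forces $P^t\langle h\rangle=\langle h\rangle$, so $P$ is block upper triangular with respect to $\langle e,f\rangle\oplus\langle h\rangle$ and both identities restrict to the $2\times 2$ corner. The ``if'' direction is the same in both treatments (your map is the paper's explicit matrix, read off from the previous Proposition at $k=1$).
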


\begin{proof}
	It is easy to check that the matrix
	 $$
	\begin{pmatrix}
		0 & 1 & 0 \\
		1 & 0 & 0 \\
		0 & 0 & -1 \\
	\end{pmatrix}
	$$
	defines a Leibniz algebras isomorphism between $\mathfrak{l}_3^a$ and $\mathfrak{l}_3^{-a}$. Conversely, let $\varphi \colon \mathfrak{l}_3^a\rightarrow \mathfrak{l}_3^{a'}$ be a Leibniz algebra isomorphism defined by 
	$$
	\varphi(x)=\alpha x'+\beta y'+\gamma z',\,\, \varphi(y)=\alpha' x'+\beta' y'+\gamma' z',\,\, \varphi(z)=kz',
	$$
	where $\left\{x,y,z\right\}$ and $\left\{x',y',z'\right\}$ are basis of $\mathfrak{l}_3^a$ and $\mathfrak{l}_3^{a'}$ respectively. Thus
	\begin{equation*}
		0=\varphi(\left[x,x\right])=\left[\varphi(x),\varphi(x)\right]=\alpha\beta\left( 1+a'-1+
		a' \right)z'=2\alpha\beta a'z',
	\end{equation*}
	
	\begin{equation*}
		0=\varphi(\left[y,y\right])=\left[\varphi(y),\varphi(y)\right]=\alpha'\beta'\left( 1+a'-1+
		a' \right)z'=2\alpha'\beta' a'z',
	\end{equation*}
	
	\begin{equation*}
		k\left(1+a\right)z'=\varphi(\left[x,y\right])=\left[\varphi(x),\varphi(y)\right]=\left( \alpha'\beta\left(1+a'\right)+\alpha\beta'\left(-1+a'\right)\right)z',
	\end{equation*}
	
	\begin{equation*}
		k\left(-1+a\right)z'=\varphi(\left[y,x\right])=\left[\varphi(y),\varphi(x)\right]=\left( \alpha\beta'\left(1+a'\right)+\alpha'\beta\left(-1+a'\right)\right)z'.
	\end{equation*}
	We have that $\left(\alpha',\beta\right)=(0,0)$ or $\left(\alpha,\beta'\right)=(0,0)$. In the first case $\varphi$ is the identity map and $a=a'$. In the second case $\varphi$ is defined by $\varphi(x)=y',\,\, \varphi(y)=x'$ and $\varphi(z)=-z'$, thus $a'=-a$.
\end{proof}

\subsection{The case $\F=\mathbb{R}$}
Irreducible polynomials in $\R[x]$ have degree one or two. Let $f(x)\in\R[x]$ be an irreducible monic polynomial. If $f(x)=x-a$, then we obtain the same results of the previous case. So we suppose that $f(x)=x^2+bx+c$, with $b^2-4c<0$.\\
\\
Let $z=\alpha + i \beta \in \mathbb{C}$ be a root of $f(x)$. Then $f(x)=(x-z)(x-\bar{z})$ and the companion matrix $A$ of $f(x)^k$ in similar to the $2k \times 2k$ real block matrix
$$
J_R=\begin{pmatrix}
	R&0&\cdots&0\\I_2&R&\cdots&0\\\vdots&\ddots&\ddots&\vdots\\0&\cdots&I_2&R
\end{pmatrix},
$$
where
$$
R=R_{\alpha,\beta}=\begin{pmatrix}
	\alpha & \beta \\
	-\beta & \alpha
\end{pmatrix}
$$
is the realification of the complex number $z$. Thus $\mathfrak{l}_{4k+1}^{A} \cong \mathfrak{l}_{4k+1}^{J_R}$ and the structure matrix is given by
\begin{equation}
	$$\[
	\left(
	\begin{array}{c|c|c}
		&  & 0 \\
		0 & I_n+J_R & \vdots \\
		& & 0 \\
		\hline
		&  & 0 \\
		-I_n+J_R^t & 0 & \vdots \\
		& & 0 \\
		\hline
		0 \cdots 0 & 0 \cdots 0 & 0
	\end{array}
	\right).
	\]$$
\end{equation}
In the case that $k=1$, the real Heisenberg Leibniz algebra $\mathfrak{l}_{5}^R$ is the realification of the complex algebra $\mathfrak{l}_3^z$. Thus we can conclude that
\begin{prop}
	Let $f(x),g(x) \in \mathbb{R}[x]$ be two irreducible monic polynomials of degree two and let $z,z' \in \mathbb{C}$ be roots of $f(x)$ and $g(x)$ respectively. Let $R,R' \in \operatorname{M}_2(\mathbb{R})$ be the realification of the complex numbers $z$ and $z'$. Then $\mathfrak{l}_5^{R} \cong \mathfrak{l}_5^{R'}$ if and only if $R'=\pm R$.
\end{prop}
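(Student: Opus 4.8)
The plan is to deduce both implications from the complex case already established for $\mathfrak{l}_3^a$, using that, as recalled just above, $\mathfrak{l}_5^R$ is the realification of $\mathfrak{l}_3^z$ whenever $R$ is the realification of a root $z=\alpha+i\beta$ of $f$ (and $\beta\neq 0$, since $f$ is irreducible over $\R$).

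First I would prove the implication $R'=\pm R\ \Rightarrow\ \mathfrak{l}_5^R\cong\mathfrak{l}_5^{R'}$. If $R'=R$ there is nothing to do, so suppose $R'=-R$. Then $-R=-R_{\alpha,\beta}=R_{-\alpha,-\beta}$ is exactly the realification of $-z=-\alpha-i\beta$, and $-z$ is a root of $f(-x)=x^2-bx+c$, still a monic irreducible polynomial of degree two over $\R$; hence $\mathfrak{l}_5^{-R}$ is the realification of $\mathfrak{l}_3^{-z}$. Since realification is a functor from complex to real Leibniz algebras it preserves isomorphisms, and $\mathfrak{l}_3^z\cong\mathfrak{l}_3^{-z}$ by the previous classification of the $\mathfrak{l}_3^a$ (as $-z$ and $z$ differ by a sign); therefore $\mathfrak{l}_5^R\cong\mathfrak{l}_5^{-R}=\mathfrak{l}_5^{R'}$.

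For the converse I would invoke the congruence criterion proved above: $\mathfrak{l}_5^R\cong\mathfrak{l}_5^{R'}$ forces the two structure matrices to be congruent. In each case the radical of the associated bilinear form is the one-dimensional subspace $\F h$ (the blocks $I_2+R$ and $-I_2+R^t$ are invertible because $\beta\neq 0$), so a congruence carries radical to radical and restricts, on the complementary nondegenerate subspace $\operatorname{Span}_\R\{e_1,e_2,f_1,f_2\}$, to a $\operatorname{GL}_4(\R)$-congruence between $\bigl(\begin{smallmatrix}0 & I_2+R\\ -I_2+R^t & 0\end{smallmatrix}\bigr)$ and the analogous matrix for $R'$. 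Taking symmetric and skew-symmetric parts, which a congruence preserves, the skew part is in both cases the standard symplectic form $\omega=\bigl(\begin{smallmatrix}0 & I_2\\ -I_2 & 0\end{smallmatrix}\bigr)$, so the congruence matrix $P$ is symplectic and in addition conjugates the symmetric parts $\sigma_R=\bigl(\begin{smallmatrix}0 & R\\ R^t & 0\end{smallmatrix}\bigr)$ and $\sigma_{R'}$; a short computation then shows that the block-diagonal operators $\omega^{-1}\sigma_R=\operatorname{diag}(-R^t,R)$ and $\omega^{-1}\sigma_{R'}=\operatorname{diag}(-(R')^t,R')$ are similar. Comparing characteristic polynomials gives $(t-z)(t-\bar z)(t+z)(t+\bar z)=(t-z')(t-\bar z')(t+z')(t+\bar z')$, hence $z'\in\{z,\bar z,-z,-\bar z\}$ and $R'\in\{R,R^t,-R,-R^t\}$. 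Since $R$ is similar to $R^t$ over $\R$ — equivalently, $R^t$ is the realification of the other root $\bar z$ of $f$ — we have $\mathfrak{l}_5^{R^t}\cong\mathfrak{l}_5^R$, and replacing the chosen root of $g$ by its conjugate when necessary reduces the cases $R'=\pm R^t$ to $R'=\pm R$, which proves the statement.

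The step I expect to be the crux is the converse: extracting from an abstract isomorphism a complete invariant of the underlying pair of forms. The operator $\omega^{-1}\sigma_R$, whose spectrum is $\{\pm z,\pm\bar z\}$, does this cleanly, but one must take care that the isomorphism of the $5$-dimensional algebras really produces a well-behaved congruence of the $4\times 4$ nondegenerate blocks. If one prefers to lean entirely on realification, an alternative converse is to complexify, use $(\mathfrak{l}_5^R)_{\mathbb C}\cong\mathfrak{l}_3^z\oplus\mathfrak{l}_3^{\bar z}$ together with the uniqueness of the decomposition into indecomposable summands, and invoke the complex classification again to obtain $z'\in\{\pm z,\pm\bar z\}$. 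A minor point worth flagging is that the realification of a root is only canonical up to the exchange $R\leftrightarrow R^t$ coming from the two conjugate roots of $f$, which is exactly why the four sign/transpose possibilities collapse to $R'=\pm R$.
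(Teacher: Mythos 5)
Your forward implication coincides with the paper's. For the converse you take a genuinely different route: the paper simply observes that $\mathfrak{l}_5^{R}$ is the realification of $\mathfrak{l}_3^{z}$ and transports Proposition 3.2, implicitly assuming that every $\R$-linear isomorphism of the realifications is induced by a $\mathbb{C}$-linear one --- an assumption it does not justify. You instead invoke the congruence criterion of Section 2, pass to the nondegenerate $4\times 4$ block, and use the similarity invariant $\omega^{-1}\sigma_R=\operatorname{diag}(-R^t,R)$ of the pair of forms; this is rigorous, self-contained, and consistent with the Kronecker-module viewpoint of the classification. The intermediate steps check out: the radical is $\R h$ because $I_2+R$ and $-I_2+R^t$ are invertible for $\beta\neq 0$, a congruence respects the symmetric/skew splitting, and a symplectic congruence conjugates $\omega^{-1}\sigma_R$ into $\omega^{-1}\sigma_{R'}$, which yields $z'\in\{\pm z,\pm\bar z\}$.

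The one step that does not go through is the final collapse of $R'=\pm R^t$ to $R'=\pm R$ by \enquote{replacing the chosen root of $g$ by its conjugate}: the statement fixes $z'$, hence $R'$, so you are not free to replace it. What your argument actually proves is $R'\in\{\pm R,\pm R^t\}$, and this cannot be sharpened: since $R^t$ is similar to $R$, the change of basis $\operatorname{diag}\left(X,(X^{-1})^t\right)$ exhibited in Section 2 gives $\mathfrak{l}_5^{R^t}\cong\mathfrak{l}_5^{R}$, while $R^t=R_{\alpha,-\beta}\neq\pm R$ whenever $\alpha\beta\neq 0$; so taking $g=f$ and $z'=\bar z$ produces isomorphic algebras with $R'\neq\pm R$. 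In other words, your method exposes that the conclusion should read $R'\in\{\pm R,\pm R^t\}$ (equivalently $g(x)=f(x)$ or $g(x)=f(-x)$), and the paper's shorter proof glosses over precisely the conjugate-linear isomorphisms responsible for the extra cases. Up to this imprecision --- which lies in the statement rather than in your reasoning --- your argument is complete and more robust than the one in the paper.
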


\begin{proof}
 The algebras $\mathfrak{l}_5^{R}$ and $\mathfrak{l}_5^{R'}$ are the realification of the complex Heisenberg Leibniz algebras $\mathfrak{l}_3^{z}$ and $\mathfrak{l}_3^{z'}$ respectively. From \textit{Proposition 3.2} we know that $\mathfrak{l}_3^{z} \cong \mathfrak{l}_3^{z'}$ if and only if $z = \pm z'$. Moreover, these are $\mathbb{R}$-linear isomorphisms because the matrix associated with the isomorphism $\varphi\colon \mathfrak{l}_3^z \leftrightarrows \mathfrak{l}_3^{-z}$  is the rotation 
 $$
 \begin{pmatrix}
 	0 & 1 & 0 \\
 	1 & 0 & 0 \\
 	0 & 0 & -1 \\
 \end{pmatrix} \in \operatorname{SO}(3).
 $$
  Thus $\mathfrak{l}_5^{R} \cong \mathfrak{l}_5^{R'}$ if and only if $R = \pm R'$.
\end{proof}

\section{Global integration of nilpotent Leibniz algebras}

In the case of the correspondence between a Lie group $G$ and its Lie algebra $\mg=\operatorname{T}_1G$, the bracket 
$$
[x,y]=\operatorname{ad}_x(y),
$$ where
$\operatorname{ad}\colon\mg\rightarrow\mathfrak{gl}(\mg)$, is the differential of the adjoint representation $\operatorname{Ad}\colon G\rightarrow \operatorname{GL}(\mg)$, which in turn is the differential of the conjugation map $\gamma: x \mapsto \gamma_x$, where $\gamma_x(y):=x \rhd y$ and $x \rhd y = xyx^{-1}$, for every $x,y \in G$. In the case of Leibniz algebras, there is no hope, as we will see, of finding such a map $\operatorname{Ad}$, but it is still possible to define an algebraic structure $(X,\rhd)$, called a \emph{rack}, whose operation, differentiated twice, defines on $\operatorname{T}_1X$ a Leibniz algebra structure.

From now on, unless explicitly stated, the underlying field of any vector space will be the real numbers.

\begin{definition}
	A (left) rack is a set $X$ with a binary operation $\rhd: X\times X\rightarrow X$ which is (left) autodistributive, that is, for all $x,y,z\in X$, $x\rhd\left(y\rhd z\right)=\left(x\rhd y\right)\rhd\left(x\rhd z\right)$ and such that $x\rhd -:X\times X\rightarrow X$ is a bijection for all $x\in X$. A rack is said to be pointed if there exists an element $1\in X$, called the unit, such that $1\rhd x=x$ and $x\rhd 1=1$ for all $x\in X$. 	A rack $(X,\rhd)$ is a quandle if $x\rhd x=x$, for every $x\in X$ (i.e.\ $\rhd$ is idempotent).
\end{definition}

A \textit{pointed rack homomorphism} is a map $f:X\rightarrow Y$ such that $f(x\rhd y)=f(x)\rhd f(y)$, for all $x,y\in X$ and such that $f(1_X)=1_Y$. 

Every group endowed with the conjugation is a pointed rack, so there is a functor $\operatorname{Conj}\colon \textbf{Grp}\rightarrow\textbf{Rack}$, between the category \textbf{Grp} of groups and the category \textbf{Rack} of racks. This functor has a left adjoint \hbox{$\operatorname{As}\colon \textbf{Rack}\rightarrow\textbf{Grp}$} defined by 
$$
\operatorname{As}(X)=\operatorname{F}(X)/\overline{\langle \lbrace xyx^{-1} \left(x\rhd y^{-1}\right) \;\vert\; xy\in X\rbrace \rangle},
$$ where $\operatorname{F}(X)$ is the free group generated by $X$. 
\begin{definition}
	A Lie rack is a pointed rack $\left(X,\rhd,1\right)$ such that $X$ is a smooth manifold, $\rhd$ is a smooth map and such that for all $x\in X$ $x\rhd -$ is a diffeomorphism.
\end{definition}

In \cite{kinyon2004leibniz} \mbox{M.\ K.\ Kinyon} shows that the tangent space at the unit element of a \textit{Lie rack} $(X,\rhd)$, endowed with the bracket 
$$
[x,y]=\operatorname{ad}_x(y),
$$
where $\operatorname{ad}\colon \operatorname{T}_1X\rightarrow\mathfrak{gl}(\operatorname{T}_1X)$ is the differential of the map $\Phi\colon X \mapsto \operatorname{GL}(\operatorname{T}_1X)$, where $\Phi(x)=\operatorname{T}_1\phi(x)$ and $\phi(x)=x \rhd -$, for every $x \in X$, is a Leibniz algebra. Summarizing we have

\begin{prop}{\cite{kinyon2004leibniz}}
	If $X$ is a Lie rack, then the above bracket $[x,y]=\operatorname{ad}_x(y)$ defines on $\operatorname{T}_1X$ a Leibniz algebra structure.
\end{prop}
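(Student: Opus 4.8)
The plan is to verify directly that the bracket $[x,y]=\operatorname{ad}_x(y)$ satisfies the left Leibniz identity, by differentiating the left self-distributivity law of the rack twice at the unit. The starting point is that for each fixed $x\in X$ the map $\phi(x)=x\rhd-$ is a diffeomorphism of $X$ fixing $1$, so $\Phi(x)=T_1\phi(x)\in\operatorname{GL}(T_1X)$ is well defined; moreover the self-distributivity $x\rhd(y\rhd z)=(x\rhd y)\rhd(x\rhd z)$ says precisely that $\phi(x)$ is a rack automorphism, hence (after differentiating in $z$ at $1$) that $\Phi(x)\in\operatorname{Aut}(T_1X,\text{"second-order bracket"})$ — but to make this precise I first need to know the bracket itself, so the cleanest route is the purely computational one below.

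First I would set up notation: write $\operatorname{ad}=T_1\Phi:T_1X\to\mathfrak{gl}(T_1X)$, so that $[x,y]=\operatorname{ad}_x(y)=\bigl(T_1\Phi\bigr)(x)\,(y)$ is obtained by taking a curve $t\mapsto x(t)$ through $1$ with $\dot x(0)=x$, forming $\Phi(x(t))\in\operatorname{GL}(T_1X)$, differentiating at $t=0$ to get $\operatorname{ad}_x\in\mathfrak{gl}(T_1X)$, and applying it to $y$. Next, I would differentiate the identity $\phi(x)\circ\phi(y)=\phi(x\rhd y)\circ\phi(x)$ — which is just a restatement of left self-distributivity, using that $x\rhd(y\rhd z)=(x\rhd y)\rhd(x\rhd z)$ holds for all $z$ — by applying $T_1$ and using the chain rule together with $\phi(x)(1)=x\rhd1=1$. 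This yields the relation $\Phi(x)\,\Phi(y)=\Phi(x\rhd y)\,\Phi(x)$ in $\operatorname{GL}(T_1X)$, valid for all $x,y\in X$.

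The heart of the argument is then to differentiate this group-valued identity $\Phi(x)\Phi(y)=\Phi(x\rhd y)\Phi(x)$ once in $x$ and once in $y$ at the unit. Differentiating in $y$ at $y=1$ (using $\Phi(1)=\operatorname{id}$) gives $\Phi(x)\,\operatorname{ad}_y=\Phi(x\rhd1)\cdot 0 + \bigl(T_1\Phi\bigr)\!\bigl(T_1(x\rhd-)(y)\bigr)\,\Phi(x)$, i.e. $\Phi(x)\,\operatorname{ad}_y\,\Phi(x)^{-1}=\operatorname{ad}_{\Phi(x)y}=\operatorname{ad}_{[x,y]+y}$ — one must be slightly careful here, since the map $x\mapsto x\rhd y$ does not fix $1$, so I would instead differentiate in $x$ first. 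Differentiating $\Phi(x)\Phi(y)=\Phi(x\rhd y)\Phi(x)$ in $x$ at $x=1$ gives, for $\dot x(0)=u$,
\[
\operatorname{ad}_u\,\Phi(y) \;=\; \bigl(T_1\Phi\bigr)\!\bigl(T_1(\,-\rhd y)(u)\bigr)\;+\;\Phi(y)\,\operatorname{ad}_u .
\]
Now $T_1(\,-\rhd y)(u)=\operatorname{ad}_u(y)=[u,y]$ by the very definition of the bracket (differentiating $x\mapsto\Phi(x)y$ at $1$), so this reads $\operatorname{ad}_u\circ\Phi(y)=\operatorname{ad}_{[u,y]}+\Phi(y)\circ\operatorname{ad}_u$ in $\mathfrak{gl}(T_1X)$, for every $y\in X$. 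Finally I differentiate this last identity in $y$ at $y=1$: with $\dot y(0)=v$ one has $T_1\Phi(v)=\operatorname{ad}_v$, and the term $\operatorname{ad}_{[u,y]}$ differentiates to $\operatorname{ad}_{[u,v]}$, yielding
\[
\operatorname{ad}_u\circ\operatorname{ad}_v \;=\; \operatorname{ad}_{[u,v]}\;+\;\operatorname{ad}_v\circ\operatorname{ad}_u
\]
in $\mathfrak{gl}(T_1X)$. Evaluating both sides at an arbitrary $w\in T_1X$ and unwinding $\operatorname{ad}_a(b)=[a,b]$ gives exactly $[u,[v,w]]=[[u,v],w]+[v,[u,w]]$, which is the left Leibniz identity.

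The main obstacle I anticipate is purely technical bookkeeping with the iterated derivatives: the maps $x\mapsto x\rhd y$ do not preserve the base point, so one must consistently differentiate in the "good" variable first (the one for which $\phi(x)(1)=1$), track the two appearances of $x$ in $\Phi(x)\Phi(y)=\Phi(x\rhd y)\Phi(x)$ separately via the Leibniz rule for derivatives of products, and justify interchanging the order of the two differentiations (standard, since $\rhd$ is smooth and everything takes place in finite dimensions). Bilinearity of $[-,-]$ is immediate from $\operatorname{ad}$ being linear and each $\operatorname{ad}_x$ being linear. Once the identity is established at the unit it is an identity of the fixed vector space $T_1X$, so no further globalization is needed.
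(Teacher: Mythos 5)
First, a point of comparison: the paper does not prove this proposition at all --- it is quoted from \cite{kinyon2004leibniz} --- so what you are really reconstructing is Kinyon's argument. Your overall strategy (differentiate the self-distributivity identity $\phi(x)\circ\phi(y)=\phi(x\rhd y)\circ\phi(x)$ twice at the unit) is exactly the right one, and your target identity $\operatorname{ad}_u\circ\operatorname{ad}_v=\operatorname{ad}_{[u,v]}+\operatorname{ad}_v\circ\operatorname{ad}_u$ is correct and does unwind to the left Leibniz identity. The first differentiation, giving $\Phi(x)\Phi(y)=\Phi(x\rhd y)\Phi(x)$ in $\operatorname{GL}(T_1X)$, is also fine, since $x\rhd 1=1$ guarantees all the relevant maps fix the base point.

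The gap is in the second differentiation, and it comes from choosing the wrong order of the two remaining derivatives. You correctly observe that $x\mapsto x\rhd y$ does not fix $1$ (it sends $1$ to $y$), but you then conclude that you should differentiate in $x$ first --- this is backwards. Differentiating $\Phi(x\rhd y)$ in $x$ at $x=1$ produces $T_1(-\rhd y)(u)$, which lives in $T_yX$, not in $T_1X$; your identification $T_1(-\rhd y)(u)=\operatorname{ad}_u(y)=[u,y]$ conflates the point $y\in X$ with a tangent vector in $T_1X$ and is a genuine type error (the map $x\mapsto x\rhd y$ on $X$ and the map $x\mapsto\Phi(x)v$ on $T_1X$ are different objects). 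Consequently your intermediate identity $\operatorname{ad}_u\circ\Phi(y)=\operatorname{ad}_{[u,y]}+\Phi(y)\circ\operatorname{ad}_u$ is not well formed for $y\in X$, and the subsequent step ``$\operatorname{ad}_{[u,y]}$ differentiates to $\operatorname{ad}_{[u,v]}$'' has nothing rigorous to differentiate. The repair is to differentiate in $y$ first: since $y\mapsto x\rhd y$ \emph{does} fix $1$, the curve $s\mapsto x\rhd y(s)$ is based at $1$ with velocity $\Phi(x)v$, so one obtains the well-typed identity $\Phi(x)\operatorname{ad}_v\Phi(x)^{-1}=\operatorname{ad}_{\Phi(x)v}$ for all $x\in X$, $v\in T_1X$ (i.e.\ each $\Phi(x)$ is an automorphism of the bracket); differentiating \emph{this} in $x$ at $x=1$, using the product rule and $T_1(x\mapsto\Phi(x)v)(u)=[u,v]$, yields $\operatorname{ad}_u\operatorname{ad}_v-\operatorname{ad}_v\operatorname{ad}_u=\operatorname{ad}_{[u,v]}$, which is your final identity. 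With that reordering the proof is complete and agrees with Kinyon's.
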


The converse problem, that is to find a manifold
endowed with a smooth operation such that the tangent space at the distinguished point, endowed with the differential of this operation, gives a Leibniz algebra isomorphic to the given one, is known as the coquecigrue problem. In \cite{kinyon2004leibniz} M.\ K.\ Kinyon solves the \emph{coquecigrue problem} for the class of \emph{split Leibniz algebras}, that are Leibniz algebras $\mg$ with a bilateral ideal \mbox{$\operatorname{Leib}(\mg) \subseteq \mathfrak{a} \subseteq \operatorname{Z}_l(\mg)$} and with a Lie subalgebra $\mathfrak{h} \subseteq \mg$ such that $\mg=\mathfrak{h} \oplus \mathfrak{a}$ (as a direct sum of vector spaces) and 
$$
[(x,a),(y,b)]=([x,y],\rho_x(b)), \; \; \forall (x,a),(y,b) \in \mathfrak{h} \oplus \mathfrak{a}
$$
where $\rho\colon \mathfrak{h} \times \mathfrak{a} \rightarrow \mathfrak{a}$ is the action on the $\mathfrak{h}$-module $\mathfrak{a}$. More precisely, we have the following.

\begin{prop}{\cite{kinyon2004leibniz}}
	Let $\mg$ be a split Leibniz algebra. Then there exists a Lie rack $X$ such that $\operatorname{T}_1X$ is isomorphic to $\mg$.
\end{prop}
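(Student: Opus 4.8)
The plan is to realize $\mathfrak g = \mathfrak h \oplus \mathfrak a$ as the Leibniz algebra of an explicit Lie rack built from the semidirect product associated with the $\mathfrak h$-module $\mathfrak a$. First I would integrate the Lie subalgebra $\mathfrak h$ to a connected simply connected Lie group $H$ (Lie's third theorem), and integrate the representation $\rho : \mathfrak h \to \mathfrak{gl}(\mathfrak a)$ to a smooth representation $\bar\rho : H \to \operatorname{GL}(\mathfrak a)$, which is possible precisely because $H$ is simply connected. Then form the Lie group $G = H \ltimes_{\bar\rho} \mathfrak a$, where $\mathfrak a$ is regarded as its own additive Lie group; its Lie algebra is the semidirect product $\mathfrak h \ltimes_\rho \mathfrak a$, which as a vector space is $\mathfrak g$ but with the Lie bracket $[(x,a),(y,b)]_{\mathrm{Lie}} = ([x,y], \rho_x(b) - \rho_y(a))$, \emph{not} the Leibniz bracket we want.

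The key idea is therefore not to take $X = \operatorname{Conj}(G)$, but to twist the conjugation rack of $G$ by the normal abelian factor so that the antisymmetric term $-\rho_y(a)$ disappears after differentiating twice. Concretely, I would define on the manifold $X := H \times \mathfrak a$ the operation
$$
(x,a) \rhd (y,b) := \bigl(x \rhd_H y,\ \bar\rho_x(b)\bigr),
$$
where $x \rhd_H y = xyx^{-1}$ is conjugation in $H$. One checks directly that this is left self-distributive and that $(x,a)\rhd -$ is a diffeomorphism of $X$ with smooth inverse $(y,c) \mapsto (x^{-1}\rhd_H y, \bar\rho_{x^{-1}}(c))$, so $(X,\rhd)$ is a Lie rack pointed at $1 = (1_H, 0)$; indeed $1 \rhd (y,b) = (y,b)$ and $(x,a)\rhd 1 = (1_H,0)=1$. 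This is exactly the kind of construction Kinyon uses for the split case, and it is well-defined because $\operatorname{Leib}(\mathfrak g) \subseteq \mathfrak a \subseteq \operatorname{Z}_l(\mathfrak g)$ guarantees that the $\mathfrak a$-component of a bracket depends only on the $\mathfrak h$-component of the left input through $\rho$.

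It remains to compute $T_1 X$ with the Kinyon bracket and identify it with $\mathfrak g$. Writing $\phi(x,a) = (x,a)\rhd -$, the map $\Phi = T_1\phi : X \to \operatorname{GL}(T_1 X)$ sends $(x,a)$ to the block-diagonal operator $\bigl(\operatorname{Ad}^H_x, \bar\rho_x\bigr)$ acting on $T_1 X \cong \mathfrak h \oplus \mathfrak a$; note the $\mathfrak a$-coordinate $a$ drops out because translation in the abelian slot is by a \emph{fixed} group element, contributing nothing to the differential at the identity. Differentiating $\Phi$ at $1$ gives $\operatorname{ad}_{(x,a)} = \bigl(\operatorname{ad}^{\mathfrak h}_x, \rho_x\bigr)$, hence
$$
[(x,a),(y,b)] = \operatorname{ad}_{(x,a)}(y,b) = \bigl([x,y],\ \rho_x(b)\bigr),
$$
which is precisely the split Leibniz bracket of $\mathfrak g$. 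Thus $T_1 X \cong \mathfrak g$ as Leibniz algebras, and by Proposition~\ref{} (Kinyon) $X$ is a genuine Lie rack, completing the proof.

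The main obstacle I expect is purely bookkeeping: verifying that the twisted operation $\rhd$ really is self-distributive and a rack (as opposed to checking it on the nose for conjugation, here one must carry the $\bar\rho$-cocycle terms), and then carefully justifying that the second differential of $\phi$ kills the additive slot so that no spurious antisymmetrization survives — i.e. showing that $\operatorname{ad}_{(x,a)}$ genuinely equals $(\operatorname{ad}^{\mathfrak h}_x,\rho_x)$ with no dependence on $b$ beyond the linear $\rho_x(b)$ term. Everything else (existence of $H$, integrability of $\rho$ using simple connectedness, smoothness of $\rhd$) is standard Lie theory.
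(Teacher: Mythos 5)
Your construction is correct and is essentially the one behind the result as cited: the rack $X=H\times\mathfrak{a}$ with $(x,a)\rhd(y,b)=\left(xyx^{-1},\bar\rho_x(b)\right)$ is exactly Kinyon's linear Lie rack for the split (demisemidirect product) case, and your double differentiation recovering $[(x,a),(y,b)]=([x,y],\rho_x(b))$ is the intended verification. The paper states this proposition only by citation to \cite{kinyon2004leibniz} and gives no proof of its own, so there is nothing further to compare against.
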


More recently S.\ Covez in \cite{covez2013local} gives a solution to this problem which in general is only local: he shows how to integrate every Leibniz algebra into a local Lie rack. The central point of his result is to see every Leibniz algebra $\mg$ as an abelian extension of his left center $\operatorname{Z}_l(\mg)$ and to integrate explicitly the corresponding Leibniz algebra $2$-cocycle into a Lie local rack $2$-cocycle. However M.\ Bordemann and F.\ Wagemann (see \cite{integrBordemann}) and J.\ Mostovoy (see \cite{acommentMostovoy}) give independently two different answers to the general coquecigrue problem: M.\ Bordemann and F.\ Wagemann's solution is not functorial (as well as S.\ Covez's method, because the left center does not depend in a functorial manner on the Leibniz algebra in general); J.\ Mostovoy's solution is global but does not generalize the classical Lie solution. The general coquecigrue problem is still open.

The aim of this section is to use the Leibniz algebras - Lie local racks correspondence proposed by S. Covez to show that the integration of the two-step nilpotent Leibniz algebras is global. \\

In \cite{covez2013local} S.\ Covez gives the definition of \textit{smooth rack modules}, \textit{rack cohomology} and \textit{cohomolgy theory} for Leibniz algebras. In particular, for $X$ a Lie rack and $A$ a smooth $X-$module, he defines a cochain complex $\lbrace \operatorname{CR}^n\left(X,A\right), d^n_R\rbrace_{n\in\mathbb{N}}$  by setting
$$
\operatorname{CR}^n\left(X,A\right)=\left\{f:X^n\rightarrow A\;\vert\; f(x_1,\ldots,1,\ldots,x_n)=0, \text{$f$ is smooth in a neighborhood of }\left(1,\ldots,1\right)\in X^n\right\}
$$
and $d^n_R$ is the differential operator. Moreover, for $\mg$ a left Leibniz algebra and $M$ a $\mg$-module, he defines a cochain complex $\left\{\operatorname{CL}^n(\mg,M),dL^n\right\}_{n\in\mathbb{N}}$ by setting
$$
\operatorname{CL}^n(\mg,M)=Hom(\mg^{\otimes n},M)
$$
and $dL^n$ is the differential operator.\\

Given a left Leibniz algebra $\mathfrak{g}$, there are several ways to see $\mg$ as an abelian extension of a Lie subalgebra $\mg_0\subseteq \mathfrak{gl}(V)$ by a $\mg_0-$module $\mathfrak{a}$. For example we can take $\mathfrak{a}=\operatorname{Z}_l(\mg)$ and $\mg_0=\mg/\operatorname{Z}_l(\mg)$. Thus we can associate with $\mg$ a short exact sequence
$$
0\rightarrow \operatorname{Z}_l(\mg) \hookrightarrow \mg \twoheadrightarrow \mg_0 \rightarrow 0.
$$
in the category \textbf{LeibAlg}. $\operatorname{Z}_l(\mg)$ is a $\mg_0-$module (in the sense of Lie algebras), so there is a Leibniz algebras $2-$cocycle $\omega\in \operatorname{ZL}^2\left(\mg_0,\operatorname{Z}_l(\mg)\right)$ such that $\mg=\mg_0\oplus_\omega \operatorname{Z}_l(\mg)$. The Leibniz bracket in $\mg$ can be written as follows
$$
\left[\left(x,a\right),\left(y,b\right)\right]=\left(\left[x,y\right]_{\mg_0}, \rho_x(b)+\omega(x,y)\right),
$$
where $\rho\colon \mg_0\times \operatorname{Z}_l(\mg)\rightarrow \operatorname{Z}_l(\mg)$ is the action induced by the $\mg_0$-module structure of $\operatorname{Z}_l(\mg)$. 
\begin{thm}{\cite{covez2013local}}\label{thmcovez}
Every Leibniz algebra $\mg=\mg_0\oplus_\omega \mathfrak{a}$ can be integrated into a local Lie rack of the form 
$$
G_0\times_f \mathfrak{a}
$$
with operation defined by
$$
(g,a)\rhd (h,b)=\left(ghg^{-1}, \phi_g(b)+f(g,h)\right)
$$
and unit $(1,0)$, where $G_0$ is a Lie group such that $\operatorname{Lie}(G_0)=\mg_0$, $\phi$ is the exponentiation of the action $\rho$, \\$f\colon G_0\times G_0\rightarrow \mathfrak{a}$ is the Lie local racks $2-cocycle$ defined by 
$$
f(g,h)=\int_{\gamma_h}\left(\int_{\gamma_g}\tau^2(\omega)^{eq}\right)^{eq},\;\;\forall g,h\in G_0
$$
and $\tau^2(\omega)\in \operatorname{ZL}^1(\mg_0,Hom(\mg_0,\mathfrak{a}))$ is defined by $\tau^2(\omega)(x)(y)=\omega(x,y)$, for all $x,y\in\mg_0$.
\end{thm}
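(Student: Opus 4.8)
The plan is to build the integrating local Lie rack in three layers, following the cohomological machinery of \cite{covez2013local}, and then to verify the rack axioms and the identification of the tangent Leibniz algebra. First, since $\mg_0$ is a finite-dimensional Lie algebra, Lie's third theorem provides a simply connected Lie group $G_0$ with $\operatorname{Lie}(G_0)=\mg_0$, and its conjugation rack $\operatorname{Conj}(G_0)$ supplies the ``Lie part'' of the construction. Second, since $\mathfrak{a}$ is an abelian Lie algebra whose underlying manifold is a vector space, the $\mg_0$-action $\rho$ integrates without obstruction to a smooth action $\phi\colon G_0\to\operatorname{GL}(\mathfrak{a})$, $g\mapsto\phi_g$, with $T_1\phi=\rho$. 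The only delicate layer is the integration of the Leibniz $2$-cocycle $\omega\in\operatorname{ZL}^2(\mg_0,\mathfrak{a})$ to a local Lie rack $2$-cocycle $f\in\operatorname{CR}^2(G_0,\mathfrak{a})$; once such an $f$ is available, one sets $X:=G_0\times_f\mathfrak{a}$ with the stated operation and unit $(1,0)$ and checks that it is a local Lie rack with $T_{(1,0)}X\cong\mg$.

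To produce $f$, the first step is to pass from $2$-cocycles to $1$-cocycles via the map $\tau^2$: the formula $\tau^2(\omega)(x)(y)=\omega(x,y)$ realizes $\omega$ as an element of $\operatorname{ZL}^1\bigl(\mg_0,\operatorname{Hom}(\mg_0,\mathfrak{a})\bigr)$, where $\operatorname{Hom}(\mg_0,\mathfrak{a})$ is given the $\mg_0$-module structure making $\tau^2$ compatible with the differentials in low degrees --- pinning down this structure is part of Covez's comparison between $\operatorname{CL}^\bullet$ and $\operatorname{CR}^\bullet$. A Lie-algebra $1$-cocycle integrates to a group $1$-cocycle by a van~Est-type path integral: for $g\in G_0$ one chooses a smooth path $\gamma_g$ from $1$ to $g$ and integrates the left-translated, equivariantly twisted $1$-cochain $\tau^2(\omega)^{eq}$ along $\gamma_g$; the $1$-cocycle identity makes the integrand closed, so the result is path-independent on a simply connected neighbourhood of $1$ --- and this is exactly where only a \emph{local} statement survives. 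Evaluating and integrating a second time along a path $\gamma_h$ from $1$ to $h$, using the induced $G_0$-action on $\operatorname{Hom}(\mg_0,\mathfrak{a})$, produces
$$
f(g,h)=\int_{\gamma_h}\Bigl(\int_{\gamma_g}\tau^2(\omega)^{eq}\Bigr)^{eq}\ \in\ \mathfrak{a},
$$
defined on a neighbourhood of $(1,1)$.

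The verification then proceeds in four steps. First, one checks that $f$ is smooth near $(1,1)$ and that $f(1,h)=f(g,1)=0$, so that $f\in\operatorname{CR}^2(G_0,\mathfrak{a})$. Second, one shows that $f$ satisfies the local Lie rack $2$-cocycle identity; this follows by unwinding the two nested integrations and applying Stokes' theorem together with the Leibniz $2$-cocycle identity for $\omega$ at each stage. Third, it then follows that $X=G_0\times_f\mathfrak{a}$ with $(g,a)\rhd(h,b)=(ghg^{-1},\phi_g(b)+f(g,h))$ is a local pointed rack, smooth where defined, hence a local Lie rack. Fourth, differentiating $\rhd$ twice at $(1,0)$ one recovers on $T_{(1,0)}X=\mg_0\oplus\mathfrak{a}$ the bracket $[(x,a),(y,b)]=([x,y]_{\mg_0},\rho_x(b)+\omega(x,y))$: conjugation in $G_0$ differentiates to $\operatorname{ad}$, $\phi$ differentiates to $\rho$, and the second-order jet of $f$ at $(1,1)$ returns $\omega$ because the double integration is designed precisely as an inverse to the ``differentiate twice'' passage $\operatorname{CR}^2\to\operatorname{CL}^2$; hence $T_{(1,0)}X\cong\mg$.

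The hard part will be the second verification step together with the well-definedness of $f$: one must show that the two nested equivariant path integrals close up (path-independence on a neighbourhood of $1$) and that the outcome lies in the kernel of the rack differential $d^2_R$. This rests on Covez's identification of the integration maps $\operatorname{CL}^n\to\operatorname{CR}^n$ for $n\le 2$ as compatible with the differentials via a van~Est argument, and on carefully tracking the module $\operatorname{Hom}(\mg_0,\mathfrak{a})$ together with its $\mg_0$- and $G_0$-actions so that the inner and outer integrations chain correctly; the bulk of the work is this cohomological bookkeeping rather than any single hard estimate.
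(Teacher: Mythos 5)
A preliminary remark: the paper does not prove this statement at all --- it is quoted verbatim from Covez \cite{covez2013local} as background, so there is no in-paper proof to compare yours against. Measured against Covez's actual argument, your proposal reproduces the correct architecture (simply connected $G_0$ by Lie III, integration of $\rho$ to $\phi$ using simple connectedness, the shift $\operatorname{CL}^2(\mg_0,\mathfrak{a})\to\operatorname{ZL}^1(\mg_0,\operatorname{Hom}(\mg_0,\mathfrak{a}))$ via $\tau^2$, and the nested van Est--type path integrals producing $f$). But as a proof it has a genuine gap: every step that carries the actual content of the theorem is announced rather than established. Concretely, (i) the well-definedness of $f$ requires showing that $\tau^2(\omega)^{eq}$ and the equivariant form of the inner integral are \emph{closed} $1$-forms, which in turn requires specifying the $\mg_0$- and $G_0$-module structures on $\operatorname{Hom}(\mg_0,\mathfrak{a})$ and checking that $\tau^2$ and the integration maps are chain maps; you flag this as ``cohomological bookkeeping'' but never pin down the module structure or verify the cocycle conditions. (ii) The assertion that $\rhd$ is (locally) self-distributive is exactly the statement that $f$ satisfies the rack $2$-cocycle identity $\phi_g\bigl(f(h,k)\bigr)+f(g,hkh^{-1}) = \phi_{ghg^{-1}}\bigl(f(g,k)\bigr)+f(ghg^{-1},gkg^{-1})+\dots$; invoking ``Stokes' theorem together with the Leibniz $2$-cocycle identity'' names the tools but this is the theorem, not a remark, and it is precisely here (not in path-independence, which is global once $G_0$ is simply connected) that the construction degenerates to a \emph{local} rack --- so your attribution of the locality to path-dependence misidentifies where the obstruction lives. (iii) The claim that the second-order jet of $f$ at $(1,1)$ returns $\omega$ ``because the double integration is designed precisely as an inverse to differentiating twice'' is circular as written; it needs the explicit Taylor computation of the iterated integrals.

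In short: your outline is a faithful table of contents for Covez's proof, and nothing in it points in a wrong direction, but the three verifications above are the entire mathematical substance of the result and none of them is carried out. To turn this into a proof you would have to fix the $\mg_0$-module structure on $\operatorname{Hom}(\mg_0,\mathfrak{a})$, prove that the two integration maps $\operatorname{ZL}^1\to\operatorname{ZR}^1$ commute with the differentials on a neighbourhood of the identity, deduce the local rack cocycle identity for $f$, and compute $\frac{\partial^2}{\partial s\,\partial t}\big|_{s=t=0}\bigl(\exp(sx),sa\bigr)\rhd\bigl(\exp(ty),tb\bigr)$ explicitly to recover $\bigl([x,y]_{\mg_0},\rho_x(b)+\omega(x,y)\bigr)$.
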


We finally can answer the question whether a Lie rack integrating a Leibniz algebra can be the quandle   $\operatorname{Conj}(G)$, for a suitable Lie group $G$. The answer is no in general, as the following theorem shows.

\begin{thm}\label{thm1}
	Let $R$ be a Lie rack integrating a Leibniz algebra $\mg$. $R$ is a quandle if and only if $\mg$ is a Lie algebra. In particular $R=\operatorname{Conj}(G)$, where $\operatorname{Lie}(G)=\mg$.
\end{thm}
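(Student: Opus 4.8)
The plan is to prove the two implications separately: the ``only if'' part by differentiating the rack operation twice at the unit, and the ``if'' part together with the identification $R\cong\operatorname{Conj}(G)$ by reducing, via S.\ Covez's canonical integration (Theorem~\ref{thmcovez}), to the model $G_0\times_f\mathfrak a$ with $\mathfrak a=\operatorname{Z}_l(\mg)$ and $\mg_0=\mg/\operatorname{Z}_l(\mg)$ — a reduction which is legitimate either by the very definition of the integration or by its uniqueness up to local isomorphism.

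For the forward implication, suppose $R$ is a quandle. Fix a chart around $1\in R$ identifying a neighbourhood of $1$ with a neighbourhood of $0$ in $\mg=T_1R$, with $1\leftrightarrow 0$, and write $P(g,h)=g\rhd h$ in these coordinates. The pointed-rack axioms $1\rhd h=h$ and $g\rhd 1=1$ translate into $P(0,h)=h$ and $P(g,0)=0$, so the Taylor expansion of $P$ at the origin has the form $P(g,h)=h+B(g,h)+(\text{terms of order}\ge 3)$ with $B$ bilinear, and therefore $\partial_hP(g,h)\big|_{h=0}=\operatorname{id}+B(g,-)+O(|g|^{2})$. Consequently $\operatorname{ad}_x=B(x,-)$ and the Leibniz bracket of $\mg$ is $[x,y]=B(x,y)$. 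Comparing the degree-two terms on both sides of the quandle identity $P(g,g)=g$ forces $B(g,g)=0$ in a neighbourhood of $0$, hence $B(x,x)=0$ for every $x\in\mg$ by bilinearity. Thus $\operatorname{Leib}(\mg)=\operatorname{Span}_{\F}\{[x,x]\}=0$ and $\mg$ is a Lie algebra. (Alternatively, the same conclusion can be read off from the model $G_0\times_f\mathfrak a$: the quandle identity applied to $(g,a)\rhd(g,a)$ forces $\phi_g=\operatorname{id}$ and $f(g,g)=0$, whence $\rho=0$ and $\omega(x,x)=0$, i.e.\ $\operatorname{Leib}(\mg)=0$.)

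For the converse, assume $\mg$ is a Lie algebra, so $\operatorname{Leib}(\mg)=0$ and $\operatorname{Z}_l(\mg)=\operatorname{Z}(\mg)$. Realise $\mg$ as the central extension $0\to\operatorname{Z}(\mg)\hookrightarrow\mg\twoheadrightarrow\mg_0\to 0$; since $\operatorname{Z}(\mg)$ is central the induced $\mg_0$-action $\rho$ is trivial, and from $[v,v]=(0,\omega(x,x))=0$ the $2$-cocycle $\omega$ is skew-symmetric, i.e.\ a genuine Chevalley--Eilenberg cocycle. By Theorem~\ref{thmcovez} the integrating local Lie rack is then $G_0\times_f\operatorname{Z}(\mg)$ with $(g,a)\rhd(h,b)=(ghg^{-1},\,b+f(g,h))$, because $\phi$ is the exponentiation of $\rho=0$ and hence the identity. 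On the other hand, integrating the Lie-algebra central extension (with $G_0$ simply connected and $\operatorname{Z}(\mg)$ the additive vector group) produces a Lie group $G=G_0\times_\beta\operatorname{Z}(\mg)$ with $\operatorname{Lie}(G)=\mg$, whose conjugation has exactly the same shape $(g,a)\rhd(h,b)=(ghg^{-1},\,b+f'(g,h))$ for a smooth rack $2$-cocycle $f'$ built out of the group cocycle $\beta$. Since both $f$ and $f'$ integrate the same Leibniz $2$-cocycle $\omega$, they agree as germs at $(1,1)$, so the Covez rack is an open subrack of $\operatorname{Conj}(G)$; in particular it is a quandle, and $R\cong\operatorname{Conj}(G)$ with $\operatorname{Lie}(G)=\mg$. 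Together with the forward implication this also yields the ``in particular'' statement.

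The chart computation and the cocycle bookkeeping are routine; I expect the real difficulty to lie in two places. First, the identification $f=f'$ — that is, the uniqueness of the integration of a Leibniz $2$-cocycle into a Lie local rack $2$-cocycle — requires unwinding the iterated equivariant integral $f(g,h)=\int_{\gamma_h}\big(\int_{\gamma_g}\tau^2(\omega)^{eq}\big)^{eq}$ of Theorem~\ref{thmcovez} and comparing it with the conjugation cocycle of $G$. Second, one must pass from this local identification to the global statement: here one uses that $\{g\in R:g\rhd g=g\}$ is a closed subrack containing a neighbourhood of $1$, together with the connectedness of $R$ (and, if necessary, simple connectedness or passage to a cover) and the uniqueness of the global integration, to conclude that a global Lie rack integrating a Lie algebra is necessarily a quandle.
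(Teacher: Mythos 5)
Your proposal is correct in substance, and its core argument for the crucial direction coincides with the paper's: the parenthetical remark in your second paragraph (apply the quandle identity to $(g,a)\rhd(g,a)$ in the Covez model $G_0\times_f\mathfrak a$ to force $f(g,g)=0$ and $\phi_g=\operatorname{id}$, hence $\rho=0$ and $\omega(x,x)=0$) is exactly the paper's proof that a Lie quandle integrates only a Lie algebra. What you add is genuinely different and arguably more robust: the primary Taylor-expansion argument at the unit ($P(g,h)=h+B(g,h)+O(3)$, quandle identity kills the bilinear term, so $[x,x]=B(x,x)=0$) is intrinsic to the rack and does not require first identifying $R$ with the Covez model $G_0\times_f\operatorname{Z}_l(\mg)$ --- a reduction the paper performs with the phrase ``thus $R$ is of the form'' without justification for an arbitrary integrating rack. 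For the identification $R=\operatorname{Conj}(G)$, the two arguments are mirror images: the paper constructs $G=G_0\times_F\operatorname{Z}_l(\mg)$ and asserts the existence of a group $2$-cocycle $F$ with $f(g,h)=F(g,h)-F(g,g^{-1})+F(gh,g^{-1})$, while you compare the Covez cocycle $f$ with the conjugation cocycle $f'$ of the integrated central extension and assert they agree. Neither step is actually carried out --- you are candid about this, listing the unwinding of the iterated equivariant integral and the local-to-global passage as the real remaining work, whereas the paper presents the corresponding assertions as established. So your write-up is at least as complete as the paper's, with one honest caveat: the claim that $f$ and $f'$ ``agree as germs because they integrate the same $\omega$'' needs care, since integration of a $2$-cocycle is at best unique up to coboundary; the paper's formulation (choose $F$ so that the displayed relation holds) is the cleaner way to phrase what must be proved.
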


\begin{proof} 
	If $\mg$ is a Lie algebra, then it is clear that $R=Conj(G)$, where $\operatorname{Lie}(G)=\mg$.
	Conversely, we suppose that $R$ is a Lie quandle. Again we can write $\mg=\mg_0\oplus_\omega \operatorname{Z}_l(\mg)$, thus $R$ is of the form $G_0\times_f \operatorname{Z}_l(\mg)$, with multiplication 
	$$
	(g,a)\rhd (h,b)=\left(ghg^{-1}, \phi_g(b)+f(g,h)\right),
	$$
	where $f$ is the Lie racks $2-$cocycle integrating $\omega$. To prove that $\mg$ is a Lie algebra, we have to show that $\left[(x,a),(x,a)\right]=(0,0)$, for all $(x,a)\in\mg$.
	
	The condition $(g,a)\rhd(g,a)=(g,a)$ implies that $f(g,g)=0$, for all $g\in G_0$, and then $\phi_g(a)=a$, for all $a\in Z_l(\mg)$. Indeed the action $\rho$ of $\mg_0$ on $\operatorname{Z}_l(\mg)$ is trivial and $\omega(x,x)=0$ for all $x\in\mg_0$. Finally $R=\operatorname{Conj}(G)$, where $G=G_0\times_F \operatorname{Z}_l(\mg)$ is the Lie group with operation 
	$$
	(g,a)(h,b)=(gh, a+b+F(g,h)),
	$$
	and $F\colon G_0\times G_0\rightarrow \operatorname{Z}_l(\mg)$ is a Lie group $2-$cocycle such that 
	$$
	f(g,h)=F(g,h)-F(g,g^{-1})+F(gh,g^{-1})\;\;\;\forall g,h\in G_0.
	$$
	In fact with this condition we have that
	$$
	(g,a)\rhd (h,b)=(g,a)(h,b)(g,a)^{-1} \;\;\;\forall (g,a),(h,b)\in G_0\times \operatorname{Z}_l(\mg)
	$$
	and the Lie algebra of the Lie group $G$ is clearly $\mg$.
\end{proof}

Now we will claim a result about the integration of  nilpotent Leibniz algebras. We will show that, for this class of Leibniz algebras, the integration proposed by Covez is global. 

\begin{thm}\label{thmnil}
	Every nilpotent  real Leibniz algebra $\mg$ has a global integration into a Lie rack.
\end{thm}
\begin{proof}

Let $\mg$ be a nilpotent real Leibniz algebra and we see $\mg$ as the abelian extension of $\mg_0=\mg/\operatorname{Z}_l(\mg)$ by its left center $\operatorname{Z}_l(\mg)$. Then $\mg_0$ is a nilpotent Lie algebra, thus for every $x\in\mg_0$ the action $\rho_x$ defined by 
$$
\rho_x(a)=\operatorname{ad}_x(a),\;\;\forall a\in \operatorname{Z}_l(\mg),
$$
can be represented as $m\times m$ strictly lower triangular matrix (see \cite{erdmann2006introduction}), where $m=\dim_\R \operatorname{Z}_l(\mg)$. If $G_0$ is the simply connected Lie group integrating $\mg_0$, the action $\phi_g\in\operatorname{Aut}(\operatorname{Z}_l(\mg))$ obtained by the exponentiation of the matrix $\rho_x$, is a unitriangular matrix which entries are polynomial expressions of the coordinates of the vector $x\in\mg_0$, with respect to a fixed basis. Thus, for every $g,h\in G_0$, fixed the two smooth paths $\gamma_g(s)=g^s$ and $\gamma_h(t)=h^t$ in $G_0$ from $1$ to $g$ and from $1$ to $h$ respectively, we have that the Lie racks $2-$cocycle
$$
f(g,h)=\int_{\gamma_h}\left(\int_{\gamma_g}\tau^2(\omega)^{eq}\right)^{eq},\;\;\forall g,h\in G_0
$$
is always defined because it involves the integration of matrices with entries in $\mathbb{R}[s]$ and $\mathbb{R}[t]$. Thus \hbox{$G_0=\mg_0\times_f Z_l(\mg)$} has a Lie global rack structure integrating the nilpotent Leibniz algebra $\mg$.
\end{proof}

In the case that $\mg$ is a two-step nilpotent Leibniz algebra, a Lie rack integrating $\mg$ can be defined without integrating the Leibniz algebras $2-$cocycle associated with $\mg$. 

\begin{thm} \label{thm2}
	Let $(\mg,[-,-])$ be a two-step nilpotent Leibniz algebra and let $\omega\colon \mg_0 \times \mg_0 \rightarrow \left[\mg,\mg\right]$, where $\mg_0=\mg / \left[\mg,\mg\right]$, be the Leibniz algebras $2-$cocycle associated with the short exact sequence
	$$
	0\rightarrow \left[\mg,\mg\right] \hookrightarrow \mg \twoheadrightarrow \mg_0 \rightarrow 0.
	$$
	Then the multiplication
	$$
	\left(x,a\right)\rhd\left(y,b\right)=\left(y,b+\omega(x,y)\right),\;\;\forall \left(x,a\right),\left(y,b\right)\in \mg_0\times\left[\mg,\mg\right]
	$$
	defines a Lie global rack structure on $\mg_0\times \left[\mg,\mg\right]$, such that $\operatorname{T}_{(0,0)}(\mg_0\times_{\omega} \left[\mg,\mg\right], \rhd)$ is a Leibniz algebra isomorphic to $\mg$.
\end{thm}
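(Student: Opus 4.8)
The plan is to prove all three assertions by direct computation, bypassing Covez's integration of the $2$-cocycle altogether: I verify that the explicit operation $\rhd$ is a pointed rack on the whole vector space $\mg_0\times[\mg,\mg]$, that it is smooth with smooth left translations (hence a \emph{global} Lie rack), and that twice differentiating $\rhd$ at $(0,0)$ returns the bracket of $\mg$. The reason this works is that, for a two-step nilpotent $\mg$, every ingredient of Covez's construction degenerates.

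First I would record the structural simplifications. Since $\mg$ is two-step nilpotent, $[\mg,\mg]\subseteq\operatorname{Z}(\mg)$, so $\mg_0=\mg/[\mg,\mg]$ is an \emph{abelian} Lie algebra and the induced action $\rho$ of $\mg_0$ on $[\mg,\mg]$ is trivial; hence the bracket of $\mg=\mg_0\oplus_\omega[\mg,\mg]$ is simply $[(x,a),(y,b)]=(0,\omega(x,y))$, and the $2$-cocycle identity on $\omega$ is vacuous. Correspondingly, the Lie group $G_0$ with $\operatorname{Lie}(G_0)=\mg_0$ is the additive vector group of $\mg_0$, conjugation in $G_0$ is trivial, and the exponentiation $\phi$ of $\rho$ is the identity. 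Feeding this into Theorem~\ref{thmcovez}, Covez's local rack $G_0\times_f[\mg,\mg]$ has operation $(g,a)\rhd(h,b)=(h,b+f(g,h))$; the content of the theorem is that one may simply take $f=\omega$ and obtain something defined on all of $\mg_0\times[\mg,\mg]$.

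Next I would check the rack axioms for $X:=\mg_0\times[\mg,\mg]$ with $(x,a)\rhd(y,b)=(y,b+\omega(x,y))$. Bilinearity of $\omega$ gives $(0,0)\rhd(y,b)=(y,b)$ and $(y,b)\rhd(0,0)=(0,0)$, so $(0,0)$ is a unit; each left translation $(y,b)\mapsto(y,b+\omega(x,y))$ is linear with inverse $(y,b)\mapsto(y,b-\omega(x,y))$, hence a bijection; and left self-distributivity is immediate, since both sides of $(x_1,a_1)\rhd\big((x_2,a_2)\rhd(x_3,a_3)\big)=\big((x_1,a_1)\rhd(x_2,a_2)\big)\rhd\big((x_1,a_1)\rhd(x_3,a_3)\big)$ equal $\big(x_3,\,a_3+\omega(x_1,x_3)+\omega(x_2,x_3)\big)$, the first coordinate never being altered by $\rhd$. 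Since $X$ is a finite-dimensional real vector space, $\rhd$ is polynomial and each left translation is a linear automorphism, hence a diffeomorphism; thus $(X,\rhd,(0,0))$ is a Lie rack, and it is global because it is defined on the whole of $X$.

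Finally I would compute the tangent Leibniz algebra. Identify $T_{(0,0)}X$ with $\mg_0\times[\mg,\mg]$. For $x=(x_0,a)$ the map $\phi(x)=x\rhd-$ is linear, so $\Phi(x):=T_{(0,0)}\phi(x)=\operatorname{id}+N(x)$, where $N(x_0,a)\colon(v_0,w)\mapsto(0,\omega(x_0,v_0))$; moreover $\Phi(0,0)=\operatorname{id}$ and $x\mapsto N(x)$ is linear, so the differential $\operatorname{ad}=T_{(0,0)}\Phi$ satisfies $\operatorname{ad}_{(u_0,c)}=N(u_0,c)$. Therefore the induced bracket $[u,v]=\operatorname{ad}_u(v)$ on $T_{(0,0)}X$ is $[(u_0,c),(v_0,w)]=(0,\omega(u_0,v_0))$, which is precisely the bracket of $\mg$, and the identity map on the common underlying vector space is the desired Leibniz algebra isomorphism $T_{(0,0)}(X,\rhd)\xrightarrow{\ \sim\ }\mg$. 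The only point requiring care is the bookkeeping of the two successive differentiations together with the canonical identifications $T_{(0,0)}X\cong\mg_0\times[\mg,\mg]$ and $T_{\operatorname{id}}\operatorname{GL}(T_{(0,0)}X)\cong\mathfrak{gl}(T_{(0,0)}X)$; I do not expect a substantive obstacle.
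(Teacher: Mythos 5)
Your proposal is correct and takes essentially the same route as the paper: both exploit the fact that for a two-step nilpotent $\mg$ the quotient $\mg_0$ is abelian and acts trivially on $[\mg,\mg]$, so one may take $G_0=\mg_0$ and use $\omega$ itself as the rack $2$-cocycle, then recover the bracket by differentiating $\rhd$ twice at $(0,0)$. The only difference is that you explicitly verify the pointed rack axioms (unit, bijectivity of the left translations, left self-distributivity) and the smoothness, which the paper asserts without computation; this is a welcome but not substantively different addition.
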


\begin{proof}
We have $\left[\mg,\mg\right]\subseteq \operatorname{Z}(\mg)$, so we can see $\mg$ as an abelian extension of $\left[\mg,\mg\right]$ by the quotient $\mg_0=\mg/\left[\mg,\mg\right]$ via a Leibniz algebras $2-$cocycle $\omega\in \operatorname{ZL}^2(\mg_0,\left[\mg,\mg\right])$. Thus $\mg=\mg_0\oplus_\omega \left[\mg,\mg\right]$ with bracket 
$$
\left[\left(x,a\right),\left(y,b\right)\right]=\left(0,\omega(x,y)\right).
$$ 
In fact the condition $\left[\mg,\mg\right]\subseteq \operatorname{Z}_l(\mg) \cap \operatorname{Z}_r(\mg)$ implies that the action of $\mg_0$ on $\left[\mg,\mg\right]$ is trivial.
Moreover $\mg_0$ is an abelian algebra, thus a Lie group integrating $\mg_0$ is $G_0=\mg_0$. Then we can define a Lie rack structure on the cartesian product $\mg_0\times\left[\mg,\mg\right]$ by setting 
$$
\left(x,a\right)\rhd\left(y,b\right)=\left(y,b+\omega(x,y)\right)\;\;\forall \left(x,a\right),\left(y,b\right)\in \mg_0\times\left[\mg,\mg\right],
$$
with unit element $\left(0,0\right)$. Finally the tangent space $\operatorname{T}_{\left(0,0\right)}\left(\mg_0\times\left[\mg,\mg\right]\right)$ has a Leibniz algebra structure isomorphic to $\mg$. In fact
\begin{equation*}
\frac{\partial^2}{\partial s\partial t}\bigg|_{s,t=0}(sx,sa)\rhd (ty,tb)=\frac{\partial^2}{\partial s\partial t}\bigg|_{s,t=0}\left(ty,tb+\omega\left(sx,ty\right)\right)=\left(0,\omega\left(x,y\right)\right)=\left[\left(x,a\right),\left(y,b\right)\right].
\end{equation*}
\end{proof}

\begin{rem}
The effective strategy in the proof of \textit{Theorem} \ref{thm2} was to choose $G_0=\mg_0$ as a Lie group integrating the abelian algebra $\mg_0$. If we change the Lie group $G_0$, then the integration may not be global, as the following example illustrates.
\end{rem}

\begin{ex}
	Let $a \in \mathbb{R}$ and let $\mg=\mathfrak{l}_3^a$ be the three-dimensional Heinseberg Lebiniz algebra. Then $[\mg,\mg]=\operatorname{Z}(\mg)\cong\mathbb{R}$ and we can see $\mg$ as an abelian extension of the Lie algebra $\mg_0=\mg / [\mg,\mg] \cong \mathbb{R}^2$ by $\mathbb{R}$. The corresponding Leibniz algebras $2$-cocycle is
	$$
	\omega((x,y),(x',y'))=(1+a)xy'+(-1+a)x'y.
	$$	
	Now we can choose 
	$$
	G_0=\operatorname{SO}(2) \times \operatorname{SO}(2) \cong \lbrace (e^{ix},e^{iy}) \; \vert \; x,y \in \mathbb{R}\rbrace
	$$
	as a Lie group integrating $\mg_0$. In this case a Lie local rack integrating $\mg$ is $(G_0 \times \operatorname{SO}(2),\rhd)$ with multiplication
	$$
	(e^{ix},e^{iy},e^{iz}) \rhd (e^{ix'},e^{iy'},e^{iz'})=(e^{ix'},e^{iy'},e^{i(z'+\omega((\operatorname{log}(e^{ix}),\operatorname{log}(e^{iy})),(\operatorname{log}(e^{ix'}),\operatorname{log}(e^{iy'})))}),
	$$
	that is defined only for $(x,y),(x',y') \in [0,2\pi[ \times [0,2\pi[$, where we choose $[0,2\pi[$ as the domain of the principal value of the function $\operatorname{log}$. Thus the integration is not global.
\end{ex}

In order to show that \textit{Theorem} \ref{thm2} provides an effective tool for the construction of a global rack integrating a Leibniz algebra $\mg$ with $[\mg,\mg] \subseteq \operatorname{Z}(\mg)$, we can reformulate an example proposed by S. Covez in \cite{covez2013local}.

\begin{ex}
	Let $\mg=(\mathbb{R}^4,[-,-])$ with basis $\lbrace e_1,e_2,e_3,e_4\rbrace$ and nonzero brackets
	$$
	[e_1,e_1]=[e_1,e_2]=[e_2,e_2]=[e_3,e_3]=e_4, \; [e_2,e_1]=-e_4.
	$$
	It is easy to see that $\mg$ is a left Leibniz algebra with $[\mg,\mg]=\operatorname{Z}(\mg)=\mathbb{R}e_4$. We have that $\mg=\mg_0\oplus_\omega \R e_4$, where $\mg_0\cong \operatorname{Span}_\R\left\{e_1,e_2,e_3\right\}$, and the Leibniz $2$-cocycle is given by
	$$
	\omega(x,y)=\left[(x_1,x_2,x_3,0),(y_1,y_2,y_3,0)\right]=(0,0,0,x_1y_1+x_1y_2-x_2y_1+x_2y_2+x_3y_3).
	$$
	Thus, by \textit{Theorem} \ref{thm2}, a Lie global rack integrating $\mg$ is $\left(\mg_0\times_\omega \R e_4,\rhd\right)$ with multiplication given by
	$$
	(x_1,x_2,x_3,x_4)\rhd(y_1,y_2,y_3,y_4)=(y_1,y_2,y_3,y_4+\omega(x,y)).
	$$
\end{ex}

Now we can globally integrate all the indecomposable nilpotent real Leibniz algebras with one-dimensional commutator ideal classified in the previous sections. For the Heisenberg Leibniz algebras, we will obtain Lie racks that are "perturbations" of the conjugation of the Heisenberg Lie group $H_{2n+1}$. From now on we suppose that $A \in \operatorname{M}_n(\mathbb{R})$ is the companion matrix of the power of an irreducible monic polynomial $f(x) \in \mathbb{R}[x]$. Thus $A$ is a $n \times n$ Jordan block of eigenvalue $a \in \mathbb{R}$ or $A=J_R$, where $R \in \operatorname{M}_2(\mathbb{C})$ is the realification of some complex number $z=\alpha + i \beta$.

\begin{ex}
Let $\mg=\mathfrak{l}_{2n+1}^A$ and let $\left\{e_1,\ldots,e_n,f_1,\ldots,f_n,h\right\}$ be a basis of $\mg$. Then $\left[\mg,\mg\right]=\mathbb{R}h\subseteq \operatorname{Z}(\mg)$, thus we can use \textit{Theorem} \ref{thm2} to find the Lie global rack integrating $\mg$. The Leibniz bracket of $\mg$ is given by
$$
\left[(x_1,\ldots,x_n, y_1,\ldots, y_n, z),(x_1',\ldots,x_n', y_1',\ldots, y_n', z')\right]=\left(0,\ldots,0,\sum_{i,j=1}^{n}\left(\delta_{ij}+a_{ij}\right)x_iy_j'+\sum_{i,j=1}^{n}\left(-\delta_{ij}+a_{ij}\right)x_i'y_j\right),
$$ 
so we obtain a Lie rack $R_{2n+1}^{A}=(\mg_0\times_f \mathbb{R}h,\rhd)$ with multiplication 
\begin{align*}
	(x_1,\ldots,x_n,y_1,\ldots,y_n,z)&\rhd (x_1',\ldots,x_n',y_1',\ldots,y_n',z')=\\ &=\left(x_1',\ldots,x_n',y_1',\ldots,y_n',z'+\sum_{i,j=1}^{n}\left[\left(\delta_{ij}+a_{ij}\right)x_iy_j'+\left(-\delta_{ij}+a_{ij}\right)x_i'y_j\right]\right)
\end{align*}
and $\operatorname{T}_{(0,0)}R_{2n+1}^{A}=\mathfrak{l}_{2n+1}^A$.
\end{ex}

\begin{definition}
We define $(R_{2n+1}^{A},\rhd)$ as the \emph{Heisenberg rack}.
\end{definition}

We want to explicitly that the multiplication $\rhd$ in $R_{2n+1}^A$  is a $perturbation$ of the conjugation of the Heisenberg Lie group $H_{2n+1}$. To do this, we will use the canonical matrix representation 
$$
H_{2n+1}=\left\{
\begin{pmatrix}
	1 & x & z \\
	0 & I_{n} & y^t \\
	0 & 0 & 1
	\end{pmatrix} \Bigg\vert\;  x=(x_1,\ldots,x_n),y=(y_1,\ldots,y_n)\in\mathbb{R}^n, z\in\mathbb{R}
\right\}\leq \operatorname{GL}_{n+2}(\mathbb{R}).
$$

The conjugation formula for two matrices in $H_{2n+1}$ is given by
$$
\begin{pmatrix}
	1 & x & z \\
	0 & I_{n} & y^t \\
	0 & 0 & 1
\end{pmatrix}
\begin{pmatrix}
	1 & x' & z' \\
	0 & I_{n} & y'^t \\
	0 & 0 & 1
\end{pmatrix}
\begin{pmatrix}
	1 & x & z \\
	0 & I_{n} & y^t \\
	0 & 0 & 1
\end{pmatrix}^{-1}=
\begin{pmatrix}
	1 & x' & z'+\displaystyle\sum_{i=1}^{n}(x_iy'_i-y_ix_i')\\
	0 & I_{n} & y'^t \\
	0 & 0 & 1
\end{pmatrix}
$$
With the same representation, the multiplication $\rhd$ of $R_{2n+1}^A$ turns into 
$$
\begin{pmatrix}
	1 & x & z \\
	0 & I_{n} & y^t \\
	0 & 0 & 1
\end{pmatrix}\rhd
\begin{pmatrix}
	1 & x' & z' \\
	0 & I_{n} & y'^t \\
	0 & 0 & 1
\end{pmatrix}=
\begin{pmatrix}
	1 & x' & z'+\displaystyle\sum_{i,j=1}^{n}\left[\left(\delta_{ij}+a_{ij}\right)x_iy_j'+\left(\delta_{ij}-a_{ij}\right)x_i'y_j\right]\\
	0 & I_{n} & y'^t \\
	0 & 0 & 1
\end{pmatrix},
$$
hence for $A=0_{n\times n}$, it holds $R_{2n+1}^{0_{n\times n}}=\operatorname{Conj}(H_{2n+1})$.

\begin{ex}
	Let $\mg=\mathfrak{k}_{n}$ and let $\left\{e_1,\ldots,e_n,f_1,\ldots,f_n,h\right\}$ be a basis of $\mg$. Then the Leibniz bracket of $\mg$ is given by
	\begin{align*}
	[(x_1,\ldots,x_n, y_1,\ldots, y_n, z),&(x_1',\ldots,x_n', y_1',\ldots, y_n', z')]=\\
	&=\left(0,\ldots,0,x_1y_1'+\sum_{i=2}^{n}(x_iy_i'+x_iy_{i-1}'+
	x_{i-1}'y_{i-1}-x_{i}'y_{i-1})+x_n'y_n\right),
	\end{align*}
	so we obtain a Lie global rack $K_n=(\mg_0\times_f \mathbb{R}h,\rhd)$ with multiplication 
	\begin{align*}
		(x_1,\ldots,x_n,y_1,\ldots&,y_n,z)\rhd (x_1',\ldots,x_n',y_1',\ldots,y_n',z')=\\ &=\left(x_1',\ldots,x_n',y_1',\ldots,y_n',z'+x_1y_1'+\sum_{i=2}^{n}(x_iy_i'+x_iy_{i-1}'+
		x_{i-1}'y_{i-1}-x_{i}'y_{i-1})+x_n'y_n\right)
	\end{align*}
	and $\operatorname{T}_{(0,0)}K_{n}=\mathfrak{k}_{n}$.
\end{ex}

\begin{definition}
	We define $(K_n,\rhd)$ as the \emph{Kronecker rack}.
\end{definition}

\begin{ex}
	Let $\mg=\mathfrak{d}_{n}$ and let $\left\{e_1,\ldots,e_{2n+1},h\right\}$ be a basis of $\mg$. Then the Leibniz bracket of $\mg$ is given by
	\begin{align*}
		[(x_1,\ldots,x_{2n+1},z),&(x_1',\ldots,x_{2n+1}', z')]=(0,\ldots,0,\bar{z}),
	\end{align*}
    where
    $$
    \bar{z}=x_1x_{n+2}'+\sum_{i=2}^{n}x_i(x_{i+n}'+x_{i+n+1}')+x_{n+1}x_{2n+1}'+\sum_{i=n+2}^{2n+1}x_i(x_{i-n}'-x_{i-n-1}'),
    $$
	thus a Lie global rack integrating $\mg$ is $D_n=(\mg_0\times_f \mathbb{R}h,\rhd)$ with multiplication 
	\begin{align*}
		(x_1,\ldots,x_n,y_1,\ldots,y_n,z)\rhd (x_1',\ldots,x_n',y_1',\ldots,y_n',z')=(x_1',\ldots,y_1',\ldots,y_n',z'+\bar{z})
	\end{align*}
and $\operatorname{T}_{(0,0)}D_{n}=\mathfrak{d}_{n}$.
\end{ex}

\begin{definition}
	We call $(D_n,\rhd)$ the \emph{Dieudonné rack}.
\end{definition}

We want co conclude our paper with the following example. The realification of an indecomposable nilpotent Leibniz algebra with one-dimensional commutator ideal over the field $\mathbb{C}$ is a nilpotent real Leibniz algebra with two-dimensional commutator ideal. In the following example, we integrate the realification of the complex indecomposable Heisenberg Leibniz algebra $\mathfrak{l}_{2n+1}^{J_a}$, where $J_a \in \operatorname{M}_n(\mathbb{C})$ is the Jordan block of eigenvalue $a \in \mathbb{C}$. 

\begin{ex}
Let $\mathfrak{h}=\mathfrak{l}_{2n+1}^{J_a}$ and let $\left\{e_1,\ldots,e_n,f_1,\ldots,f_n,h\right\}$ be a basis of $\mathfrak{h}$ over $\mathbb{C}$. Then $dim_{\mathbb{R}}\mathfrak{h}=4n+2$ and $\left\{e_1,ie_1,\ldots,e_n,ie_n,f_1,if_1,\ldots,f_n,if_n,h,ih\right\}$ is a basis of $\mathfrak{h}$ over $\mathbb{R}$. For every $(x_1,\ldots,x_n, y_1,\ldots, y_n, z)$, $(x_1',\ldots,x_n', y_1',\ldots, y_n', z') \in \mathbb{C}^{2n+1}$, the Leibniz bracket of $\mathfrak{h}$ over $\mathbb{R}$ is given by
$$
\left[(x_1,\ldots,x_n, y_1,\ldots, y_n, z),(x_1',\ldots,x_n', y_1',\ldots, y_n', z')\right]=\left(0,\ldots,0,\Re(\bar{z}),\Im(\bar{z})\right),
$$ 
where $\Re(a+ib)=a, \Im(a+ib)=b$ and
$$
\bar{z}=\sum_{i=1}^{n}[(1+a)x_iy_i'+(-1+a)x_i'y_i]+\sum_{i=2}^{n}(x_iy_{i-1}'+x_i'y_{i-1}).
$$ Thus a Lie global rack integrating $(\mathfrak{h},[-,-])$ is $(\mathfrak{h}_0\times_f \operatorname{Span}_{\mathbb{R}}\lbrace h,ih \rbrace,\rhd)$ with multiplication 
\begin{align*}
	(x_1,\ldots,x_n,&y_1,\ldots,y_n,z)\rhd (x_1',\ldots,x_n',y_1',\ldots,y_n',z')=\\ &=\left(\Re(x_1'),\Im(x_1'),\ldots,\Re(x_n'),\Im(x_n'),\Re(y_1'),\Im(y_1'),\ldots,\Re(y_n'),\Im(y_n'),\Re(z'+\bar{z}), \Im(z'+\bar{z}) \right).
\end{align*}
\end{ex}

%

\printbibliography

\end{document}